\newcommand{\nc}{\newcommand}
\nc{\bC}{\bold{C}} \nc{\bN}{\Bbb{N}} \nc{\cF}{\mathcal{F}}
\nc{\cE}{\mathcal{E}} \nc{\cR}{\mathcal{R}} \nc{\cM}{\mathcal{M}}
\nc{\al}{\alpha} \nc{\bt}{\beta} \nc{\gm}{\gamma} \nc{\dl}{\delta}
\nc{\om}{\omega} \nc{\sg}{\sigma} \nc{\Sg}{\Sigma} \nc{\vf}{\varphi}
\nc{\ve}{\varepsilon} \nc{\os}{\overset} \nc{\ol}{\overline}
\nc{\ul}{\underline} \nc{\us}{\underset} \nc{\sbs}{\subset}
\nc{\bsl}{\backslash} \nc{\Ra}{\Rightarrow}
\nc{\lra}{\longrightarrow} \nc{\all}{\allowdisplaybreaks}
\nc{\Codes}{\operatorname{{\bold{Codes}}}}
\nc{\RegMono}{\operatorname{\mathcal{R}{\rm{eg}\mathcal{M}{\rm{ono}\!}}}}
\nc{\RegEpi}{\operatorname{\mathcal{R}{\rm{eg}\mathcal{E}{\rm{pi}\!}}}}
\nc{\Mn}{\operatorname{\mathcal{M}{\rm{ono}\!}}}
\nc{\Ep}{\operatorname{\mathcal{E}{\rm{pi}\!}}}
\nc{\Rg}{\operatorname{\mathcal{R}{\rm{eg}\!}}}
\nc{\Ob}{\operatorname{Ob\!}}
\numberwithin{equation}{section}
\newtheorem{theo}{\ \ \ Theorem}[section]
\newtheorem{lem}[theo]{\ \ \ Lemma}
\newtheorem{prop}[theo]{\ \ \ Proposition}
\newtheorem{cor}[theo]{\ \ \ Corollary}
\newtheorem{definition}[theo]{\ \ \ Definition}
\theoremstyle{definition}
\newtheorem{exmp}[theo]{\ \ \ Example}
\theoremstyle{remark}
\newtheorem{rem}[theo]{\ \ \ Remark}
\begin{document}

\title[]
{Right-cancellable protomodular algebras}

\author{Dali Zangurashvili}

\maketitle

\begin{abstract}
A new protomodular analog of the classical criterion for the existence of a group term in the algebraic theory of a variety of universal algebras is given. To this end, the notion of a right-cancellable protomodular algebra is introduced. It is proved that the algebraic theory of a variety of universal algebras contains a group term if and only if it contains protomodular terms with respect to which all algebras from the variety are right-cancellable. This, in particular, gives a partial answer to the extended version of an open problem from loop theory whether any Hausdorff topological (semi-)loop is completely regular. Moreover, the right-cancellable algebras from the simplest protomodular varieties are characterized as sets with principal group actions as well as groups with simple additional structures. 

\bigskip

\noindent{\bf Key words and phrases}: protomodular variety;
right-cancellable protomodular algebra; group term.

\noindent{\bf 2020  Mathematics Subject Classification}: 18C05, 08B05, 22A30, 20N05.
\end{abstract}

\section{Introduction}

It is well-known that the algebraic theory of a variety of universal algebras contains a group term (i.e. a binary term that satisfies the group identities  for some unary and  0-ary terms) if and only if it contains a 0-ary term and an associative Mal'cev term, i.e. a Mal'cev term that satisfies the identity:
\begin{equation}
p(x,t,p(s,y,z))=p(p(x,t,s),y,z).
\end{equation}

\noindent This is explained e.g. in \cite{C} and in \cite{JP}. 

Recall that one of important classes of Mal'cev varieties is that of protomodular varieties. The notion of a protomodular variety, and, more generally, of a protomodular category was introduced by Bourn in \cite{B} as an abstract setting where certain properties of groups remain valid, and, in particular, there is an intrinsic notion of a Bourn normal subobject. The class of protomodular categories contains (but does not coincides with) the class of semi-abelian categories where one can develop isomorphism and decomposition theorems, radical and commutator theory, homology of non-abelian structures \cite{JMT}. 

There is a syntactical characterization of protomodular varieties due to Bourn and Janelidze \cite{BJ}. It requires the existence of a term $\theta$ of an arbitrarily high arity $(n+1)$, binary terms $\alpha_i$ and 0-ary terms $e_i$ ($i=1,2,...,n)$ that satisfy certain identities (which, in fact, first appeared in Ursini's paper \cite{U1}, but for purely universal-algebraic reasons and only in the particular case were all $e_i$'s are equal to one another). For convenience, we call these terms protomodular.

In the previous paper \cite{Z} we give a protomodular analog of the above-mentioned criterion for the existence of a group term in a variety of universal algebras. It requires a certain kind of higher associativity condition on a protomodular term $\theta$ and the fulfillment of the identities $e_1=e_2=...=e_n$. In fact, in \cite{Z} we also give the second protomodular analog of that criterion, but it involves a rather cumbersome identity. 

In the present paper we give yet another criterion for the existence of a group term in a variety of universal algebras. It is also formulated in protomodular terms, but, as different from our first criterion, involves no requirement related to 0-ary terms, and, moreover, the identitites involved in it are simpler than the identity from our second criterion. 

For the proposed criterion we introduce the notion of a \textit{right} - \textit{cancellable} protomodular algebra (with respect to protomodular terms $\theta$ and $\alpha_i$). We define it as a protomodular algebra that satisfies the additional identities
\begin{equation}
\alpha_{i}(\theta(a_{1},a_{2},...,a_{n},b),\theta(a'_{1},a'_{2},...,a'_{n},b))=
\end{equation}
$$=\alpha_{i}(\theta(a_{1},a_{2},...,a_{n},b'),\theta(a'_{1},a'_{2},...,a'_{n},b')),$$\vskip+1mm
\noindent for all $i=1,2,...,n$ (and follow this definition throughout the paper, in spite of the fact that for $n=1$ it is not equivalent to the traditional one of right-cancellability ($ab=cb\Rightarrow a=c$)). 

It is proved that for any element $u$ of a right-cancellable algebra $A$ the binary operation defined by
\begin{equation}
a\cdot_{u}b=\theta(\alpha_{1}(a,u),\alpha_{2}(a,u),...,\alpha_{n}(a,u),b)
\end{equation}\vskip+2mm
\noindent is a group operation with unit $u$ and the inverses given by
\begin{equation}
a^{-_{u}1}=\theta(\alpha_{1}(u,a),\alpha_{2}(u,a),...,\alpha_{n}(u,a),u).
\end{equation}

\noindent This fact yields  the new protomodular analog of the above-mentioned group term existence criterion that reads as follows:\vskip+2mm

\textit{The algebraic theory of a  variety of universal algebras contains a group term if and only if it contains protomodular terms with respect to which all algebras from this variety are right-cancellable.}\vskip+2mm

 It should be noted that the Mal'cev operation corresponding to the group operation (1.3) coincides with that given by Bourn and Janelidze in \cite{BJ}, and by Ursini in \cite{U}. Hence the fact that (1.3) is a group operation implies in particular that this Mal'cev operation is associative on  right-cancellable protomodular algebras.

 Note also that the fact that (1.3) is a group operation relates to an open problem from loop theory whether the classical result of topological group theory by which any Hausdorff topological group is completely regular holds true for loops too (\cite{CPS}, Problem IX.1.17). Although our result does not provide a solution of this problem as such (because, as is shown, any loop which is right-cancellable in the sense of this paper is a group), nevertheless it gives a partial answer to the extended version of the problem, namely whether any Hausdorff topological left semi-loop is completely regular. It is shown that the class of left semi-loops which are not groups, but for which the answer to the latter problem is positive is quite wide: such left semi-loops can be obtained in a simple way  from arbitrary  groups $G$ equipped with any non-trivial bijections $\sigma:G\rightarrow G$ (that are not necessarily automorphisms). This is accomplished by characterizing the right-cancellable algebras from the simplest protomodular varieties (i.e. the varieties where the signature contains only protomodular operations, while the defining identities are precisely identities (2.1) and (2.2)) as groups with simple additional structures. Such algebras are characterized also as sets with principal group actions. 

\section{Preliminaries}

For the definition of a protomodular category we refer the reader to the paper \cite{B} by Bourn.

Let $\mathbb{V}$ be a variety of universal algebras of type $\cF$.

\begin{theo}(Bourn-Janelidze \cite{BJ})
A variety $\mathbb{V}$ is protomodular if and only if its algebraic theory contains, for some  natural $n$, 0-ary terms $e_{1}, e_{2},...,e_{n}$, binary terms $\alpha_{1}$,
$\alpha_{2}$,..., $\alpha_{n}$, and an $(n+1)$-ary term $\theta$ such that the following identities are satisfied:
\begin{equation}
\alpha_{i}(a,a)=e_{i};
\end{equation}
\begin{equation}
\theta(\alpha_{1}(a,b),\alpha_{2}(a,b),...,\alpha_{n}(a,b),b)=a.
\end{equation}
\end{theo}
\vskip+3mm

For simplicity, algebras from a protomodular variety are called protomodular algebras. The terms 
$\theta$, $\alpha_i$ and $e_i$ satisfying (2.1) and (2.2) are called protomodular.

The motivating example of a protomodular variety is given by the variety of groups. More generally, any variety whose algebraic theory contains a group term is protomodular (which in particular implies that the variety of Boolean algebras is protomodular). In that case we have:
\begin{equation}
\theta(a,b)=ab,
\end{equation}  
\begin{equation}
\alpha_{1}(a,b)=a/b.
\end{equation}

\noindent and $e$ is unit of the group. Note that the variety of Boolean algebras also has another system of protomodular terms \cite{BC1} which are given by: 
\begin{equation}
\theta(a,b,c)=(a\vee c)\wedge b,
\end{equation}
\begin{equation}
\alpha_1(a,b)=a\wedge \neg b,
\end{equation}
\begin{equation}
\alpha_2(a,b)=a\vee \neg b,
\end{equation}

\noindent with $e_1=0$ and
$e_2=1$.

Other examples of protomodular varieties are provided by the varieties of left/right semi-loops, loops, locally Boolean distributive lattices \cite{BC}, Heyting algebras, Heyting semi-lattices \cite{J}. Observe that the terms (2.3) and (2.4) serve as protomodular terms in the case of left semi-loops and loops, too.

Let $\mathbb{V}$ be a protomodular variety of universal algebras. Identities (2.1) and (2.2) immediately imply  \cite{BJ}, \cite{U}, \cite{BC1} the following:\vskip+2mm

(a) if $\alpha_{i}(a,c)=\alpha_{i}(b,c)$, for all $i$ ($1\leq i\leq
n$), then $a=b$;\vskip+2mm

(b) if $\alpha_{i}(a,b)=e_{i}$, for all $i$ ($1\leq i\leq n$), then
$a=b$;\vskip+2mm

(c) the identity
\begin{equation}
\theta(e_{1},e_{2},...,e_{n},a)=a;
\end{equation}
\vskip+4mm

(d) the term  
\begin{equation}
p(a,b,c)=\theta(\alpha_{1}(a,b),\alpha_{2}(a,b),..., \alpha_{n}(a,b),c). 
\end{equation}
\hspace{1cm} is a Mal'cev term.\vskip+2mm

If, in addition, $e_1=e_2=...=e_n$, then $\mathbb{V}$ is ideal determined \cite{U}. Recall that this means that any ideal of any $\mathbb{V}$-algebra is the kernel of a unique congruence (Gumm-Ursini \cite{GU}). Moreover, for any congruence $R$ of an $\mathbb{V}$-algebra $A$, the equivalence class  containing an element $a\in A$ is equal to the set $\theta(I,I,...,I,a)$ of all elements of the form $\theta(b_1,b_2,...,b_n,a)$, where $b_1,b_2,...,b_n\in I$, while $I$ is the kernel of $R$ \cite{B1}.\vskip+2mm

Let $A$ be a set. For any element $u\in A$, there is a bijection between the set of associative Mal'cev operations on $A$ and the set of group operations on $A$ with unit $u$. The bijection is given as follows: if $p$ is an associative Mal'cev operation, then the group operation is given by $ab=p(a,u,b)$ with $a^{-1}=p(u,a,u)$. If $\cdot$ is a group operation with unit $u$, then the corresponding Mal'cev operation is given by $p(a,b,c)=ab^{-1}c$ \cite{JP} (see e.g. \cite{C} and\cite{JP}).
\vskip+2mm

Let as above $A$ be a set  and let $\theta$ be an $(n+1)$-ary operation on $A$. The operation $\theta$ is called 1-associative \cite{Z} if by shifting the internal symbol $\theta$ together with the attached parentheses in the expression $$\theta(a_{1},a_{2},...,a_{n}, \theta(b_{1},b_{2},...,b_{n}, c))$$ \noindent to any position, the value of the expression does not change, for any $a_{1},a_{2},...,a_{n}$, $b_{1},b_{2},$ $...,b_{n},$ $c\in A$. 

The operation $\theta$ is called 2-associative (or consociative) \cite{Z} if the equality  
\begin{equation}\theta(a_{1},a_{2},...,a_{n}, \theta(b_{1},b_{2},...,b_{n}, c))
=\end{equation}
$$\theta(\theta(a_{1},a_{2},...,a_{n}, b_{1}),\theta(a_{1},a_{2},...,a_n, b_{2}),...,\theta(a_{1},a_{2},...,a_{n},b_{n}), c)$$
\vskip+1mm

\noindent holds for any $a_{1},a_{2},...,a_{n}, b_{1},b_{2},...,b_{n}, c\in A$. This definition is motivated by the fact that, for a binary operation $\times$ on $A$, the associativity  can be formulated as the condition that the mapping $(-\times a)$ from $A$ to the algebra $Map(A,A)$ of mappings $A\rightarrow A$ (with the composition operation) preserves the operation $\times$, for any $a\in A$. Generalizing this condition to the case of an operation $\theta$ of high arity, we obtain precisely identity (2.10).

For ternary operations, the 1-associativity implies associativity (1.1) (below we will refer to the latter condition as "associativity"). No other implications hold between the above three associativity(-like) conditions -- associativity, 1-associativity, and 2-associativity \cite{Z}.

A protomodular algebra is called 1-associative (resp. 2-associative or consociative) if its protomodular operation $\theta$ is such. 

A protomodular term $\theta$ is called 2-associative or consociative if it satisfies identity (2.10).\vskip+2mm

Let $A$ be a protomodular algebra and $b\in A$. We denote by $\theta_b$ the mapping $A^{n}\rightarrow A$ sending an $n$-tuple $(a_{1},a_{2},...,a_{n})$ to $\theta(a_{1},a_{2},...,a_{n}, b)$.

A protomodular algebra $A$ is called strict \cite{Z} if $\theta_b$ is a bijection, for any $b\in A$, or equivalently, if the following identity holds in $A$:
\begin{equation}
\alpha_i(\theta(a_1,a_2,...,a_n,b),b)=a_i,
\end{equation}

\noindent for any $i$ $(i=1,2,...,n)$.\vskip+1mm

Throughout the paper $\mathbb{V}_{n}$ denotes the simplest protomodular variety, i.e., the variety with the signature $\cF_{n}$ containing only one $(n+1)$-ary operation symbol $\theta$, the binary operation symbols $\alpha_{1}, \alpha_{2},...,\alpha_{n}$, and the 0-ary operation symbols $e_{1},e_{2},...,e_{n}$, where the defining identities are (2.1) and (2.2).

Strict $\mathbb{V}_{1}$-algebras are precisely left semi-loops \cite{Z}.

\vskip+3mm

\section{Right-cancellable protomodular algebras}

\begin{definition}
Let $\mathbb{V}$ be a protomodular variety, and $A$ be its algebra. $A$ is called right-cancellable with respect to protomodular terms $\theta$ and $\alpha_i$ (or simply right-cancellable if no confusion might arise) if the identity 

\begin{equation}
\alpha_{i}(\theta(a_{1},a_{2},...,a_{n},b),\theta(a'_{1},a'_{2},...,a'_{n},b))=
\end{equation}
$$=\alpha_{i}(\theta(a_{1},a_{2},...,a_{n},b'),\theta(a'_{1},a'_{2},...,a'_{n},b')),$$\vskip+3mm

\noindent holds in $A$ for any $i$ ($1\leq i\leq n$).
\end{definition}\vskip+4mm

\begin{exmp}
Groups considered as protomodular algebras obviously are right-cancellable. Moreover, there are left semi-loops which are not groups, but are right-cancellable. Take for instance the set $A=\lbrace 0,1,2\rbrace$ with the following binary operations $\theta$ and $\alpha$ equal to each other:
\begin{table}[ht]
\begin{tabular}{|c|ccc|}
\hline \textbf{$\theta=\alpha$}&0&1&2\\
\hline 0&0&1&2\\
 1&2&0&1\\
 2&1&2&0\\
\hline
\end{tabular}
\end{table}

One can easily verify that $A$ is a right-cancellable left semi-loop. 

It should be however noted that not all left semi-loops are right-cancellable. For instance, the (transpose of the right) semi-loop given in the example of \cite{BC2} is not right-cancellable.
\end{exmp}\vskip+2mm

\begin{exmp}  Let $A=\{0,1\}$. Let us introduce the structure of $\mathbb{V}_{2}$-algebra on $A$ as
follows. Let $\theta(i,j,k)=k$ if $i\neq j$ and $\theta(i,j,k)=1-k$
if $i=j$. Moreover, let $\alpha_{1}(i,j)$ be $0$, for any $i,j$; let
$\alpha_{2}(i,j)$ be 0, if $i\neq j$, and be 1, if $i=j$. Besides,
let $e_{1}=0$ and $e_{2}=1$.

The condition (3.1) is obviously satisfied for $i=1$. It also is
satisfied for $i=2$ since the value of the left-hand side of (3.1)
depends only on whether $a_{1}=a_{2}$ and $a'_{1}=a'_{2}$. This
implies that $A$ is a right-cancellable $\mathbb{V}_{2}$-algebra.
\end{exmp}\vskip
+1mm

\begin{exmp}
Let $A=\lbrace 0,1,2,3\rbrace$, and $n=3$. We define operations on the set $A$ by means of the tables below, where $t_0$ is a triple $(a_1,a_2,a_3)$ from $A^3$ with $a_1,a_2,a_3\leq 1$, $t_1$ is a triple with $a_1\leq 1$ and either $a_2>1$ or $a_3>1$, $t_2$ is a triple with $a_1>1$ and either $a_2\leq 1$ or  $a_2\leq 1$, while $t_3$ is a triple with $a_1,a_2, a_3> 1$.  
\begin{table}[ht]
\begin{tabular}{|c|cccc|}
\hline \textbf{$\theta$}&0&1&2&3\\
\hline $t_0$&0&1&2&3\\
 $t_1$&1&0&3&2\\
 $t_2$&2&3&0&1\\
 $t_3$&3&2&1&0\\
\hline
\end{tabular}
\end{table}

\begin{table}[ht]
\begin{tabular}{|c|cccc|}
\hline \textbf{$\alpha_1$}&0&1&2&3\\
\hline 0&0&1&3&2\\
1&1&0&2&3\\
 2&3&2&0&1\\
 3&2&3&1&0\\
\hline
\end{tabular}
\quad
\begin{tabular}{|c|cccc|}
\hline \textbf{$\alpha_2$}&0&1&2&3\\
\hline 0&0&2&0&3\\
1&2&0&3&0\\
 2&0&3&0&2\\
 3&3&0&2&0\\
\hline
\end{tabular}
\quad
\begin{tabular}{|c|cccc|}
\hline \textbf{$\alpha_3$}&0&1&2&3\\
\hline 0&1&1&2&2\\
1&1&1&2&2\\
 2&2&2&1&1\\
 3&2&2&1&1\\
\hline
\end{tabular}
\end{table}

Let $e_1=e_2=0$ and $e_3=1$.
It can be verified that the set $A$ equipped with these operations is a right-cancellable $\mathbb{V}_3$-algebra.
\vskip+2mm
\end{exmp}
\vskip+4mm

At the end of Section 5 we will show how to construct right-cancellable $\mathbb{V}_n$-algebras of an arbitrary cardinality, for any $n$.
\vskip+3mm

 Below, when dealing with the case $n=1$, we will use the traditional abbreviation $ab$ for $\theta(a,b)$, and 
$a/b$ for $\alpha(a,b)$.
\begin{lem}
Let $\mathbb{V}$ be a protomodular variety with $n=1$, and let $A$ be an algebra from this variety with unit $e$. The  following conditions are equivalent:

(i) $A$ is right-cancellable and,  for any $a\in A$, we have:
\begin{equation}
ae=a;
\end{equation}

(ii)  $A$ is right-cancellable and,  for any $a\in A$, we have:
\begin{equation}
a/e=a;
\end{equation}

(iii) $A$ is a group.

\end{lem}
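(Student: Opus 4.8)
The plan is to run the cycle (iii)$\Rightarrow$(i)$\Rightarrow$(ii)$\Rightarrow$(iii), with the first two implications essentially free and the last one carrying all the weight. For (i)$\Rightarrow$(ii) and (ii)$\Rightarrow$(i) I would use only identity (2.2) specialised at $b=e$, which reads $(a/e)e=a$: if $ae=a$ holds for every $a$, then applying this to $a/e$ gives $(a/e)e=a/e$, hence $a/e=a$; conversely, if $a/e=a$, substituting into $(a/e)e=a$ gives $ae=a$. (Both (i) and (ii) also assert right-cancellability, so these exchanges are all that is needed.) For (iii)$\Rightarrow$(i), here ``$A$ is a group'' means that $\theta$ is a group operation on $A$: by consequence (c) the $0$-ary term $e$ is a left unit of $\theta$, hence is its two-sided unit; multiplying (2.2) on the right by the inverse of $b$ shows $a/b=ab^{-1}$; then $ae=a$ is the unit law and $\alpha(\theta(x,b),\theta(y,b))=(xb)(yb)^{-1}=xy^{-1}$ is independent of $b$, so (3.1) holds. (The same computation gives (iii)$\Rightarrow$(ii), so one could equally run the cycle through (ii).)

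The real content is (i)$\Rightarrow$(iii). The short route is to invoke the statement recorded in the Introduction that in a right-cancellable protomodular algebra the operation $a\cdot_u b=\theta(\alpha_1(a,u),\dots,\alpha_n(a,u),b)$ is, for each $u$, a group operation with unit $u$; for $n=1$ and $u=e$ this operation is $a\cdot_e b=(a/e)b$, which by (ii) is nothing but $\theta(a,b)$, so $\theta$ itself is a group operation and $A$ is a group. If one prefers an argument not relying on that general fact, I would proceed directly: specialising the cancellation identity (3.1) at $b'=e$ and using $ae=a$ yields $(xb)/(yb)=x/y$ for all $x,y,b$, and then setting $y=e$ (with $eb=b$ from (c) and $x/e=x$ from (ii)) gives the strictness identity $(ab)/b=a$. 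From these two identities, together with (2.1) and (2.2), a short calculation produces $(ax)/c=a/(c/x)$, then $c/(bc)=e/b$, then $a(e/b)=a/b$ and $e/(e/b)=b$, and finally, on replacing $b$ by $e/b$, the identity $ab=a/(e/b)$. Dividing the desired relation $(ab)c=a(bc)$ on the right by $c$ now turns its left side into $ab$ (by strictness) and its right side into $a/(c/(bc))=a/(e/b)$, so the relation collapses to $ab=a/(e/b)$; since $\alpha(-,c)$ is injective by consequence (a), this proves that $\theta$ is associative. Being a semigroup operation with left unit $e$ (consequence (c)) and left inverses $e/a$ (identity (2.2) at $a=e$), $\theta$ is then a group operation, so $A$ is a group.

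I expect the only genuine obstacle to be the associativity of $\theta$ (equivalently, of the Mal'cev term $p$ of (2.9)); everything else is formal manipulation of (2.1), (2.2) and the consequences (a)--(d). If the general group-operation statement from the Introduction may be cited at this point, there is essentially nothing left to do; in the self-contained version the one point requiring a little care is that right-cancellability together with $ae=a$ already forces strictness and the inverse-type identities above, after which associativity drops out by dividing the associativity relation by $c$ and using the injectivity of $\alpha(-,c)$.
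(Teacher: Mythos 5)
Your self-contained argument is correct and follows essentially the same route as the paper: both reduce (i)$\Rightarrow$(iii) to deriving $(xb)/(yb)=x/y$ from right-cancellability with $b'=e$, proving associativity of $\theta$, and then invoking the fact that an associative operation with a left identity (consequence (c)) and left inverses $e/a$ (from (2.2)) is a group; the paper's associativity step is just shorter, computing $((ab)c)/(bc)=(ab)/b=(ab)/(eb)=a/e=a$ and multiplying on the right by $bc$ via (2.2), where you instead accumulate the auxiliary identities $a(e/b)=a/b$, $e/(e/b)=b$, $ab=a/(e/b)$ and finish with the injectivity of $\alpha(-,c)$. One caution about your ``short route'': the statement from the Introduction that $\cdot_u$ is a group operation is Proposition 5.1, whose direct proof in the paper is itself based on this very lemma, so citing it here risks circularity unless you are careful to rely only on the proof of Proposition 5.1 that goes through the group $\overline{\theta}(A^{n})$ of translations; your elementary argument avoids the issue entirely and is the right one to keep.
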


\begin{proof}
 (i)$\Rightarrow$(ii): From (2.2) we have $(a/e)e=a$; on the other hand, (3.5) implies $(a/e)e=a/e$.
 
(i)$\Rightarrow$(iii):  From (3.5) we obtain the identity
\begin{equation}
(ab)/(a'b)=a/a'.
\end{equation}

\noindent We already know that identity (3.5) implies (3.6). Hence from (3.7) we obtain

$$(ab)c/(bc)=(ab)/b=ab/eb=a/e=a,$$

\noindent for any $a,b,c\in A$. Multiplying both parts of this equality by $(bc)$
from the right, we obtain the associativity condition. Then taking into account the fact that any set equipped with an associative binary operation that has a left identity and left inverses, is a group (see, e.g. \cite{F}), we obtain that $A$ is a group. 

The implications (ii)$\Rightarrow$(i) and (iii)$\Rightarrow$(i) are obvious.
\end{proof}

In particular, we obtain

\begin{prop}
Any loop which is right-cancellable in the sense of Definition 3.1 is a group.
\end{prop}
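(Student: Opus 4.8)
The plan is to derive Proposition 3.4 as an immediate consequence of Lemma 3.3. A loop is, in particular, a strict $\mathbb{V}_1$-algebra with a two-sided unit $e$; in the loop notation $\theta(a,b)=ab$ the defining identity (2.2) reads $(a/b)b=a$ and the identity (2.1) reads $a/a=e$. So I first record that in a loop the unit $e$ is in particular a right unit, i.e.\ identity (3.5), $ae=a$, holds for all $a\in A$. Hence a loop that is right-cancellable in the sense of Definition 3.1 satisfies condition (i) of Lemma 3.3.

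Then I would simply invoke Lemma 3.3, (i)$\Rightarrow$(iii): such an algebra is a group. That is the whole argument. Concretely:

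\begin{proof}
Let $A$ be a loop which is right-cancellable in the sense of Definition~3.1. Recall that a loop is a set equipped with a binary operation $ab=\theta(a,b)$ possessing a two-sided unit $e$ and such that all left and right translations are bijections; in particular $e$ is a right unit, so identity (3.5), $ae=a$, holds for every $a\in A$. Thus $A$ satisfies condition (i) of Lemma~3.3, and therefore, by the implication (i)$\Rightarrow$(iii) of that lemma, $A$ is a group.
\end{proof}

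The only point that requires a moment's care — and hence the "main obstacle", though it is a very mild one — is matching the loop-theoretic data with the protomodular data: one must check that a loop really is a $\mathbb{V}_1$-algebra for which $\theta$ and $\alpha$ given by (2.3)–(2.4) are protomodular terms (this is already asserted in Section 2, where it is noted that (2.3) and (2.4) serve as protomodular terms for left semi-loops and loops) and that the unit of the loop coincides with the element $e$ of Lemma~3.3. Once this dictionary is in place, the proposition is nothing more than the specialization of Lemma~3.3 to the case where $\theta$ additionally has a right unit, which every loop provides for free. No further computation is needed, since all the work — deriving associativity from right-cancellability plus (3.5), and then concluding grouphood from associativity together with a left unit and left inverses — has already been done inside the proof of Lemma~3.3.
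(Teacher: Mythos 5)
Your proof is correct and is exactly the paper's argument: the paper derives this proposition as an immediate consequence of the lemma on $\mathbb{V}_1$-algebras satisfying $ae=a$ (numbered Lemma~3.5 in the paper, not 3.3), using precisely the observation that a loop's two-sided unit gives condition (i) there. No discrepancies beyond the label.
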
\vskip+2mm

Proposition 5.7 that will be given at the end of the paper implies

\begin{prop}
Let $\mathbb{V}$ be a protomodular variety with $n=1$. Any finite $\mathbb{V}$-algebra which is right-cancellable in the sense of Definition 3.1 is a left semi-loop.
\end{prop}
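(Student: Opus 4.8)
The plan is to prove the sharper statement that \emph{every} finite $\mathbb{V}$-algebra $A$ with $n=1$ is strict, and then to quote the fact recalled in Section~2 that strict $\mathbb{V}_{1}$-algebras are precisely left semi-loops. I expect the right-cancellability hypothesis to be unnecessary when $n=1$; it is Proposition~5.7, which treats arbitrary $n$, that genuinely needs it, and the present statement is just its $n=1$ specialization.

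First I would fix $b\in A$ and look at the map $\theta_{b}\colon A\to A$, $x\mapsto\theta(x,b)$. Identity (2.2), which holds in $\mathbb{V}$ and hence in $A$, says $\theta(\alpha(a,b),b)=a$ for every $a\in A$, that is, $\theta_{b}\circ\alpha(-,b)=\mathrm{id}_{A}$; so every element of $A$ lies in the image of $\theta_{b}$, and $\theta_{b}$ is surjective. Since $A$ is finite, a surjective self-map of $A$ is automatically bijective, so $\theta_{b}$ is a bijection. As $b$ was arbitrary, $A$ is strict by the definition recalled in Section~2, and $\alpha(-,b)$ is then the two-sided inverse of $\theta_{b}$, so the strictness identity $\alpha(\theta(a,b),b)=a$ holds in $A$. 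Finally I would invoke the identification of strict $\mathbb{V}_{1}$-algebras with left semi-loops: $A$, regarded via its protomodular operations $\theta$, $\alpha$, $e$ as a $\mathbb{V}_{1}$-algebra, is then a left semi-loop, which is exactly the assertion.

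I do not expect any real obstacle here: the heart of the matter is the one-line observation that (2.2) supplies $\theta_{b}$ with the section $\alpha(-,b)$ and hence makes it surjective, after which finiteness upgrades it to a bijection. The only point demanding care is terminological --- I must not conflate ``left semi-loop'' (equivalently, strictness, i.e.\ bijectivity of every $\theta_{b}$) with classical right-cancellativity $ab=cb\Rightarrow a=c$, which, as the paper stresses, differs from Definition~3.1. An alternative route is to read the statement off Proposition~5.7, whose explicit description of right-cancellable $\mathbb{V}_{n}$-algebras makes the bijectivity of the maps $\theta_{b}$ visible; but for $n=1$ the argument above is shorter and, as noted, needs neither Proposition~5.7 nor the right-cancellability assumption.
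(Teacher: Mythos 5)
Your proof is correct, and it takes a genuinely different and more elementary route than the paper's. The paper deduces this proposition from the structure theorem of Proposition 5.7: a right-cancellable $\mathbb{V}_1$-algebra with a base point is encoded as a group $A$ equipped with maps $\sigma,\varrho\colon A\to A$ satisfying $\sigma\varrho=1_A$, and finiteness then forces $\sigma$ and $\varrho$ to be mutually inverse bijections, which is exactly the strictness condition characterizing left semi-loops. Your argument instead reads the surjectivity of $\theta_b$ directly off identity (2.2), which exhibits $\alpha(-,b)$ as a section of $\theta_b$, and lets finiteness do the rest. This uses neither Proposition 5.7 nor the right-cancellability hypothesis, so it in fact establishes the stronger statement that \emph{every} finite algebra in a protomodular variety with $n=1$ is a left semi-loop with respect to the given protomodular terms. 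That strengthening is worth flagging: it shows that the hypothesis of the paper's Proposition 5.8 (a finite algebra with $n=1$ protomodular terms that is \emph{not} a left semi-loop) can never be realized, since the non-injectivity of $\theta_b$ exploited there is impossible on a finite algebra; so your observation is not in tension with anything in the paper, but it does render Proposition 5.8 vacuous. The only input you borrow is the identification of strict $\mathbb{V}_1$-algebras with left semi-loops, which the paper states in Section 2 and itself uses in the same way, so that reliance is legitimate.
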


\begin{rem} An algebra can be right-cancellable with respect to one system of protomodular terms, but might be not such with respect to some other system of protomodular terms. The example is provided by non-trivial Boolean algebras. They obviously are right-cancellable with respect to the protomodular terms determined by a group term of the variety of Boolean algebras. However, these algebras are not right-cancellable with respect to the protomodular terms (2.5)-(2.7), as follows from the equality $\alpha_2(\theta(0,0,b),b)=\neg b$, and identity (2.8). 
\end{rem}

\begin{rem} It can be shown that non-trivial locally Boolean distributive algebras are not right-cancellable with respect to the protomodular terms given in \cite{BC1}. Similarly, non-trivial Heyting algebras and non-trivial Heyting semi-lattices are not right-cancellable with respect to the protomodular terms given in \cite{J} .
\end{rem}

Identities (3.1) can be simplified in some cases. More precisely, we have
\begin{lem}
Let $A$ be a protomodular algebra. For the conditions below we have the implications (i)$\Rightarrow$(ii) and (i)$\Rightarrow$(iii). If $A$ is either 1-associative or 2-associative, then the conditions (i) and (ii) are equivalent:

(i) $A$ is right-cancellable;

(ii) the identity \begin{equation}
\alpha_{i}(b,\theta(a_{1},a_{2},...,a_{n},b))=
\end{equation}
$$=\alpha_{i}(b',\theta(a_{1},a_{2},...,a_{n},b')),$$\vskip+3mm

\noindent holds in $A$ for any $i$ ($1\leq i\leq n$);

(iii) the identity \begin{equation}
\alpha_{i}(\theta(a_{1},a_{2},...,a_{n},b),b)=
\end{equation}
$$=\alpha_{i}(\theta(a_{1},a_{2},...,a_{n},b'),b'),$$\vskip+3mm

\noindent holds in $A$ for any $i$ ($1\leq i\leq n$).
\end{lem}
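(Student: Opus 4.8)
The plan is to establish the three implications in turn, treating the purely formal ones first and reserving the genuine work for the equivalence under an associativity hypothesis. For (i)$\Rightarrow$(ii): the right-cancellability identity (3.1) is a statement about two fixed $n$-tuples $(a_1,\dots,a_n)$ and $(a'_1,\dots,a'_n)$; I would simply specialize it by taking $a_i = e_i$ for all $i$. By identity (2.8) we have $\theta(e_1,\dots,e_n,b) = b$ and $\theta(e_1,\dots,e_n,b')=b'$, so the left-hand arguments collapse to $b$ and $b'$ and (3.1) becomes exactly (3.8). For (i)$\Rightarrow$(iii): symmetrically, specialize the \emph{primed} tuple by taking $a'_i = e_i$, so that $\theta(a'_1,\dots,a'_n,b) = b$ and $\theta(a'_1,\dots,a'_n,b')=b'$, turning (3.1) into (3.9). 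Both of these are one-line substitutions.

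The substantive part is the reverse implication (ii)$\Rightarrow$(i) under the assumption that $A$ is 1-associative or 2-associative. The idea is that in either case one can absorb the ``inner'' $\theta$ into a single outer application of $\theta$ at a new base point, reducing the general form (3.1) to the special form (3.8). Concretely, starting from $\alpha_i(\theta(a_1,\dots,a_n,b),\theta(a'_1,\dots,a'_n,b))$, I would first use (2.2) to rewrite $b$ as $\theta(\alpha_1(b,c),\dots,\alpha_n(b,c),c)$ for a suitably chosen auxiliary element $c$ (for instance $c = b'$, or some fixed element), and then invoke the associativity condition. In the 2-associative case, identity (2.10) lets one pull the tuple $(a_1,\dots,a_n)$ through, rewriting $\theta(a_1,\dots,a_n,\theta(\beta_1,\dots,\beta_n,c))$ as $\theta(\theta(a_1,\dots,a_n,\beta_1),\dots,\theta(a_1,\dots,a_n,\beta_n),c)$; in the 1-associative case one shifts the inner $\theta$ to a terminal position instead. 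Either way the upshot is that $\theta(a_1,\dots,a_n,b)$ and $\theta(a'_1,\dots,a'_n,b)$ can be re-expressed as values of $\theta$ on tuples whose last slot is a \emph{common} base point independent of the original $b$ versus $b'$ split — at which point (3.8) applies to each argument separately and yields (3.1). The same manipulation run with $b$ replaced by $b'$ gives the right-hand side of (3.1), and the two are then forced to agree.

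The main obstacle I anticipate is bookkeeping: one must choose the auxiliary element and the precise shape of the (2.2)-expansion so that the associativity identity applies cleanly and so that the base point that survives is genuinely independent of the $b/b'$ distinction, and the 1-associative and 2-associative cases need slightly different expansions (in the 1-associative case the inner $\theta$ must be moved to the last position and then stripped, which forces a particular grouping of the parentheses). I would handle the two cases in separate short paragraphs within the proof. There is also a minor point worth noting explicitly — condition (ii) is an identity holding for all $n$-tuples $(a_1,\dots,a_n)$ and all $b,b'$, so after the associativity rewriting I am free to instantiate it at the new, more complicated tuples that appear; no further cancellation machinery beyond (ii) itself is needed. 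Everything else — the passages through (2.2) and (2.8) — is routine substitution.
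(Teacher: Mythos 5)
Your forward implications are fine and match the paper: specializing (3.1) at $a_i=e_i$ (resp.\ $a'_i=e_i$) and using (2.8) gives (ii) (resp.\ (iii)) in one line. The problem is the reverse implication (ii)$\Rightarrow$(i), where your plan has a genuine gap rather than a bookkeeping issue. You propose to expand $b=\theta(\alpha_1(b,c),\dots,\alpha_n(b,c),c)$ for a $b$-independent base point $c$, push the outer tuples through by 1- or 2-associativity, and end with both arguments of $\alpha_i$ written as $\theta(\vec{P},c)$ and $\theta(\vec{Q},c)$, "at which point (3.8) applies to each argument separately." But it does not: the tuples $\vec{P},\vec{Q}$ necessarily still depend on $b$ (since $\theta(\vec a,b)$ genuinely depends on $b$), and identity (3.8) only controls expressions of the form $\alpha_i(x,\theta(\vec a,x))$ in which the \emph{first} argument is the bare base point and the tuple is fixed. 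It says nothing about $\alpha_i(\theta(\vec{P},c),\theta(\vec{Q},c))$, so your final step is exactly the statement you are trying to prove, relocated to the base point $c$.

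The missing idea (which is how the paper argues) is to realize the second argument as a translate of the \emph{first} argument rather than of an external base point. Using (2.2), write $b=\theta(c_1,\dots,c_n,\theta(a_1,\dots,a_n,b))$ with $c_i=\alpha_i(b,\theta(a_1,\dots,a_n,b))$; condition (ii) is invoked here, a first time, to guarantee that the $c_i$ do not depend on $b$. Substituting this into the second argument gives $\theta(a'_1,\dots,a'_n,\theta(c_1,\dots,c_n,x))$ with $x=\theta(a_1,\dots,a_n,b)$, and 1- or 2-associativity collapses this to $\theta(d_1,\dots,d_n,x)$ for a tuple $\vec d$ built from $\vec a'$ and $\vec c$ only, hence independent of $b$. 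The whole expression is then $\alpha_i(x,\theta(d_1,\dots,d_n,x))$, and a second application of (ii), now with $x$ as the variable base point, shows it is independent of $x$ and therefore of $b$. Without this "translate of the first argument" device, the two applications of (ii) have nothing to latch onto, and the associativity hypothesis alone will not close the argument.
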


\begin{proof}
The implications (i)$\Rightarrow$(ii) and (i)$\Rightarrow$(iii) immediately follow from identity (2.8). 

(ii)$\Rightarrow$(i): From
(2.2) we have
$$\alpha_{i}(\theta(a_{1},a_{2},...,a_{n},b),\theta(a'_{1},a'_{2},...,a'_{n},b))=$$
$$\alpha_{i}(\theta(a_{1},a_{2},...,a_{n},b), \theta(a'_{1},a'_{2},...,a'_{n},\theta(c_{1},c_{2},...,c_{n},\theta(a_{1},a_{2},...,a_{n},b)))),$$\vskip+2mm 
\noindent where
$$c_{i}=\alpha_{i}(b,\theta(a_{1},a_{2},...,a_{n},b)).$$

\noindent By (3.5) the value of $c_{i}$ does not depend on $b$. Applying the 1-associativity (resp. the 2-associativity) and then again (3.5), we obtain (3.1).

\end{proof}

\begin{rem}
The implication (ii)$\Rightarrow$(i) does not hold in general. The counterexample is provided by any Moufang loop which is not a group, as follows from Proposition 3.6. 

As to the implication (iii)$\Rightarrow$(i), it is not valid either, as the same counterexample shows. Moreover, we do not impose the additional condition of 1-associativity or 2-associativity on the algebras satisfying (3.6) for the following reason. If $n=1$, then, as is observed in Example 3.15 of \cite{Z}, the associativity condition on an algebra of this kind  implies that it is a group. If $n\geq 2$, then either of the 1-associativity and 2-associativity conditions implies that such an algebra is trivial, as is shown in Remark 3.16 of \cite{Z}.
\end{rem}

\section{Right-cancellable protomodular algebras as sets with principal group actions}

Let $A$ be an algebra from a protomodular variety, and $b\in A$. Recall that the symbol $\theta_b$ denotes the mapping $A^{n}\rightarrow A$ sending an $n$-tuple $(a_{1},a_{2},...,a_{n})$ to $\theta(a_{1},a_{2},...,a_{n}, b)$.
\begin{lem}
Let $A$ be a right-cancellable protomodular algebra. For any $b,b'\in A$, the mappings $\theta_b$ and $\theta_{b'}$ determine one and the same equivalence relation $\sim$ on $A^{n}$. 
\end{lem}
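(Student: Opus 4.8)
The plan is to show that the two equivalence relations $\ker(\theta_b)$ and $\ker(\theta_{b'})$ on $A^n$ coincide by proving containment in one direction; the symmetric argument (swapping the roles of $b$ and $b'$) then gives equality. So first I would fix $b,b'\in A$ and suppose $(a_1,\dots,a_n)$ and $(a_1',\dots,a_n')$ are two $n$-tuples with $\theta(a_1,\dots,a_n,b)=\theta(a_1',\dots,a_n',b)$, i.e.\ they are $\theta_b$-equivalent. I want to deduce $\theta(a_1,\dots,a_n,b')=\theta(a_1',\dots,a_n',b')$.

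The key idea is to recover each tuple from its image together with $b$ (or $b'$) using the protomodular identities. By identity (2.2) we have $a_i=\alpha_i(\theta(a_1,\dots,a_n,b),b)$ is \emph{not} available in general (that would be strictness), so instead I would apply the reconstruction at the level where right-cancellability bites: set $c=\theta(a_1,\dots,a_n,b)$ and $c'=\theta(a_1',\dots,a_n',b)$. The hypothesis says $c=c'$. Now consider the value $\alpha_i(\theta(a_1,\dots,a_n,b),\theta(a_1',\dots,a_n',b))=\alpha_i(c,c')=\alpha_i(c,c)=e_i$ by (2.1). By the right-cancellability identity (3.1), this quantity equals $\alpha_i(\theta(a_1,\dots,a_n,b'),\theta(a_1',\dots,a_n',b'))$ as well, for every $i$. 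Hence $\alpha_i(\theta(a_1,\dots,a_n,b'),\theta(a_1',\dots,a_n',b'))=e_i$ for all $i$, and then property (b) of Section~2 (if $\alpha_i(x,y)=e_i$ for all $i$ then $x=y$) yields $\theta(a_1,\dots,a_n,b')=\theta(a_1',\dots,a_n',b')$, which is exactly $\theta_{b'}$-equivalence. This shows $\ker(\theta_b)\subseteq\ker(\theta_{b'})$, and by symmetry the reverse inclusion holds, so the two relations are one and the same equivalence $\sim$ on $A^n$.

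I do not expect a serious obstacle here: the argument is essentially the observation that (3.1), combined with the fact that $\alpha_i(c,c)=e_i$, converts ``equal images over $b$'' into ``both sides of (3.1) equal $e_i$'', and then property (b) finishes it. The only point that needs a word of care is that $\sim$ is genuinely an equivalence relation (reflexivity, symmetry, transitivity), but this is immediate since each $\ker(\theta_b)$ is the kernel relation of a map and we have just shown they all coincide; alternatively one simply notes that the relation defined by ``$\theta(a_1,\dots,a_n,b)=\theta(a_1',\dots,a_n',b)$'' is obviously an equivalence relation for each fixed $b$.
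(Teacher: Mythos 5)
Your argument is correct and is essentially the paper's own proof: both use $\alpha_i(c,c)=e_i$ from (2.1), then the right-cancellability identity (3.1) to replace $b$ by $b'$, and finally property (b) of Section 2 to conclude equality of the images over $b'$. The symmetry remark and the observation that each $\ker(\theta_b)$ is automatically an equivalence relation are fine and add nothing problematic.
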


\begin{proof}
Assume 
\begin{equation}
\theta(a_1, a_2,..., a_n,b)=\theta(c_1, c_2,..., c_n,b).
\end{equation}
\noindent for some $a_1, a_2,..., a_n,c_1, c_2,..., c_n\in A$. Then, for any $i$, we have 
\begin{equation}
\alpha_i(\theta(a_1, a_2,..., a_n,b),\theta(c_1, c_2,..., c_n,b))=e_i.
\end{equation}

\noindent But since the algebra $A$ is right-cancellable, one can replace both $b$'s in the left-hand side of (4.2) by $b'$, and hence we obtain 
\begin{equation}
\alpha_i(\theta(a_1, a_2,..., a_n,b'),\theta(c_1, c_2,..., c_n,b'))=e_i.
\end{equation}

\noindent Since (4.3) holds for any $i$, from (b) of Section 2 we conclude that 

\begin{equation}
\theta(a_1, a_2,..., a_n,b')=\theta(c_1, c_2,..., c_n,b').
\end{equation}

\end{proof}\vskip+3mm

Let $A$ be a protomodular algebra, and $a_1, a_2,..., a_n\in A$. We denote by $\theta^{(a_1, a_2,..., a_n)}$ the mapping $A\rightarrow A$ sending $b\in A$ to $\theta(a_1, a_2,..., a_n,b)$, and use the term "translation" for a mapping of this kind.

Lemma 4.1 implies

\begin{lem}
Let $A$ be a right-cancellable algebra. If the actions of two translations coincide on at least one element of $A$, then these translations are equal.
\end{lem}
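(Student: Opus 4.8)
The plan is to read the statement off directly from Lemma 4.1. Suppose the translations $\theta^{(a_1,a_2,\dots,a_n)}$ and $\theta^{(c_1,c_2,\dots,c_n)}$ take the same value at some $b\in A$, i.e.
\[
\theta(a_1,a_2,\dots,a_n,b)=\theta(c_1,c_2,\dots,c_n,b).
\]
By the very definition of the map $\theta_b$, this says precisely that the two $n$-tuples $(a_1,a_2,\dots,a_n)$ and $(c_1,c_2,\dots,c_n)$ are identified by $\theta_b$, that is, they lie in the same class of the equivalence relation on $A^{n}$ determined by $\theta_b$.

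Next I would invoke Lemma 4.1, which asserts that, since $A$ is right-cancellable, the equivalence relation on $A^{n}$ determined by $\theta_b$ does not depend on $b$: it is one and the same relation $\sim$ for every choice of the last argument. Hence $(a_1,a_2,\dots,a_n)\sim(c_1,c_2,\dots,c_n)$, and this relation is also the one determined by $\theta_{b'}$ for an arbitrary $b'\in A$. Consequently
\[
\theta(a_1,a_2,\dots,a_n,b')=\theta(c_1,c_2,\dots,c_n,b')
\]
for every $b'\in A$, which is exactly the assertion that the two translations are equal as maps $A\to A$.

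There is essentially no obstacle here: the whole content is already packaged in Lemma 4.1, and the remaining work is only the bookkeeping of translating between the ``translation'' language (maps $A\to A$ indexed by $n$-tuples) and the ``kernel relation on $A^{n}$'' language of that lemma. If one prefers a self-contained argument, one can unwind the proof of Lemma 4.1: from $\theta(a_1,\dots,a_n,b)=\theta(c_1,\dots,c_n,b)$ apply each $\alpha_i$ to the pair of equal elements to get $\alpha_i(\theta(a_1,\dots,a_n,b),\theta(c_1,\dots,c_n,b))=e_i$ by (2.8), use right-cancellability (3.1) to replace $b$ by $b'$, and then apply (b) of Section 2 to conclude $\theta(a_1,\dots,a_n,b')=\theta(c_1,\dots,c_n,b')$.
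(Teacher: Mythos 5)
Your proof is correct and is essentially the paper's own argument: the paper derives this lemma directly from Lemma 4.1, exactly as you do, and your optional unwinding reproduces the proof of Lemma 4.1 itself. The only slip is a citation: the identity $\alpha_i(a,a)=e_i$ used in the unwinding is (2.1), not (2.8).
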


\begin{lem}
For the conditions below, the implications $(i)\Rightarrow (ii)\Rightarrow(iii)$ hold. If $A$ is strict, then the conditions (i)-(ii) are equivalent, and  the $n$-tuple $(c_1,c_2,...,c_n)$ satisfying (4.5) is unique.\vskip+2mm

(i) A is right-cancellable;

(ii) for any $a_1, a_2,..., a_n, b_1, b_2,..., b_n\in A$ there are $c_1, c_2,..., c_n\in A$ such that 

\begin{equation}
\theta^{(c_1, c_2,..., c_n)}\theta^{(b_1, b_2,..., b_n)}=\theta^{(a_1, a_2,..., a_n)}.
\end{equation}

(iii) all translations are bijections, and their inverses are also translations. 
\end{lem}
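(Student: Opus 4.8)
The plan is to run the cycle $(i)\Rightarrow(ii)\Rightarrow(iii)$ and then the two strict-case refinements, using only identities (2.2) and (2.8) -- and, for the strict part, the strictness identity (3.11) -- together with the observation that, by (2.8), $\theta^{(e_1,e_2,\dots,e_n)}$ is the identity map of $A$. For $(i)\Rightarrow(ii)$, given $a_1,\dots,a_n,b_1,\dots,b_n$, I would set
$$c_i:=\alpha_i\bigl(\theta(a_1,\dots,a_n,x),\theta(b_1,\dots,b_n,x)\bigr),$$
note that right-cancellability (3.1) makes this independent of $x\in A$, and then apply (2.2) with $a=\theta(a_1,\dots,a_n,x)$ and $b=\theta(b_1,\dots,b_n,x)$ to get $\theta(c_1,\dots,c_n,\theta(b_1,\dots,b_n,x))=\theta(a_1,\dots,a_n,x)$ for that $x$; since the $c_i$ do not depend on $x$, this holds for all $x$, which is exactly (4.5).

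For $(ii)\Rightarrow(iii)$, specialising (4.5) to $a_i=e_i$ and using (2.8) shows that every translation $\theta^{(b_1,\dots,b_n)}$ has a translation $\theta^{(c_1,\dots,c_n)}$ as a \emph{left} inverse; hence $\theta^{(b_1,\dots,b_n)}$ is injective and $\theta^{(c_1,\dots,c_n)}$ is surjective. Applying the same fact to $\theta^{(c_1,\dots,c_n)}$ yields a translation $\theta^{(d_1,\dots,d_n)}$ with $\theta^{(d_1,\dots,d_n)}\theta^{(c_1,\dots,c_n)}=\mathrm{id}$, and pre-composing with $\theta^{(b_1,\dots,b_n)}$ forces $\theta^{(d_1,\dots,d_n)}=\theta^{(b_1,\dots,b_n)}$; thus $\theta^{(b_1,\dots,b_n)}$ and $\theta^{(c_1,\dots,c_n)}$ are mutually inverse bijections, both translations, and since $(b_1,\dots,b_n)$ was arbitrary, (iii) follows. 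The one slightly delicate point of the whole argument is precisely this upgrade from a chain of one-sided inverses to a genuine two-sided inverse, and it is only a two-line formal manipulation.

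For the strict case I would prove $(ii)\Rightarrow(i)$ as follows: given $(a_i)$ and $(a'_i)$, take $(c_i)$ with $\theta^{(c_1,\dots,c_n)}\theta^{(a'_1,\dots,a'_n)}=\theta^{(a_1,\dots,a_n)}$ from (ii); for each $x$, putting $y:=\theta(a'_1,\dots,a'_n,x)$ gives $\theta(c_1,\dots,c_n,y)=\theta(a_1,\dots,a_n,x)$, so (3.11) yields $\alpha_i(\theta(a_1,\dots,a_n,x),\theta(a'_1,\dots,a'_n,x))=\alpha_i(\theta(c_1,\dots,c_n,y),y)=c_i$, which is independent of $x$ -- precisely (3.1). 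For uniqueness of $(c_1,\dots,c_n)$ in (4.5), I would cancel the bijection $\theta^{(b_1,\dots,b_n)}$ on the right (available from (iii)) to get $\theta^{(c_1,\dots,c_n)}=\theta^{(c'_1,\dots,c'_n)}$ as maps, then apply $\alpha_i(-,x)$ for a fixed $x$ and use (3.11) to conclude $c_i=c'_i$. Apart from the one point noted above, everything here is direct bookkeeping with (2.2), (2.8) and (3.11).
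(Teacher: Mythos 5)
Your proposal is correct and follows essentially the same route as the paper: the same choice $c_i=\alpha_i(\theta(a_1,\dots,a_n,x),\theta(b_1,\dots,b_n,x))$ for $(i)\Rightarrow(ii)$, the same specialisation $a_i=e_i$ plus (2.8) for $(ii)\Rightarrow(iii)$ (the paper upgrades the one-sided inverse by noting that \emph{every} translation, in particular $\theta^{(c_1,\dots,c_n)}$, is injective, while you iterate the left-inverse construction -- both work), and the same use of the strictness identity (3.11) for $(ii)\Rightarrow(i)$. You also explicitly verify the uniqueness of $(c_1,\dots,c_n)$, which the paper leaves implicit; your argument for it is fine.
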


\begin{proof}
$(i)\Rightarrow (ii)$: Let $x\in A$ and

\begin{equation}
c_{i}=\alpha_i(\theta(a_1, a_2,..., a_n,x), \theta(b_1, b_2,..., b_n,x)),
\end{equation}

\noindent for any $i$. Then
\begin{equation}
\theta(c_1, c_2,..., c_n, \theta(b_1, b_2,..., b_n,x))=\theta(a_1, a_2,..., a_n,x).
\end{equation}

\noindent Since $A$ is right-cancellable, the value of $c_i$ does not depend on $x$. This implies (4.5).

$(ii)\Rightarrow (iii)$: Taking $a_1=e_1$, $a_2=e_2$,..., $a_n=e_n$, and applying (2.8) we obtain 

\begin{equation}
\theta^{(c_1, c_2,...,c_n)}\theta^{(b_1, b_2,...,b_n)}=1_A,
\end{equation}

\noindent for some $c_1, c_2,...,c_n\in A$. This implies that $\theta^{(b_1, b_2,..., b_n)}$ is injective. Since $b_1, b_2,..., b_n$ are arbitrary, $\theta^{(c_1, c_2,...,c_n)}$ is injective too. Then (4.8) implies that $\theta^{(c_1, c_2,...,c_n)}$ is bijective. Hence 
$\theta^{(b_1, b_2,...,b_n)}$ also is bijective and $\theta^{(c_1,c_2,...,c_n)}$ is its inverse.

$(ii)\Rightarrow (i)$: From the equality (4.5) it follows that (4.7) is satisfied for any $x\in A$. But, since $A$ is strict, (4.7) implies (4.6). 
\end{proof}

\begin{rem}
Note that the condition $(iii)$ of Lemma 4.3 does not imply $(i)$, even if $A$ is strict. The counterexample is provided by any loop which is not a group, as follows from Proposition 3.6. 
\end{rem}

Recall that an action of a group $G$ on a set $X$ is called transitive if, for any $x,y\in X$ there is an element $g\in G$ with $gx=y$. If, moreover, such $g$'s are unique, or, equivalently, the stabilizer of any element of $X$ is trivial, then we say that the action is $\textit{sharply transitive}$ or $principal$. In that case $G$ obviously is bijective to $X$.\vskip+2mm

Let $A$ be a right-cancellable protomodular algebra, and let $S(A)$ denote the set of all bijections (not necessarily isomorphisms) $A\rightarrow A$. Consider the mapping
$$\overline{\theta}:A^{n}\rightarrow S(A)$$

\noindent which sends an $n$-tuple $(a_1, a_2,..., a_n)$ to the bijection  $\theta^{(a_1, a_2,..., a_n)}:A\rightarrow A$.

\begin{lem}
Let $A$ be a right-cancellable protomodular algebra. Then the set  $\overline{\theta}(A^{n})$ is a subgroup of $S(A)$, and its action on $A$ is principal. Moreover, for any $a_1, a_2,..., a_n$, $b_1,b_2,...,b_n\in A$ and $u\in A$, we have: \vskip+2mm

(a)\begin{equation}
\theta^{(a_1,a_2,...,a_n)}\theta^{(b_1,b_2,...,b_n)}=\theta^{(c_1,c_2,...,c_n)},
\end{equation}
\noindent where

\begin{equation}
c_i=
\end{equation}
$$\alpha_{i}(\theta(a_1, a_2,..., a_n,u),
\theta (\alpha_{1}(u,\theta(b_1,b_2,...,b_n,u)),\alpha_{2}(u,\theta(b_1,b_2,...,b_n,u)),$$
$$...,\alpha_{n}(u,\theta(b_1,b_2,...,b_n,u)),u),$$\vskip+2mm

(b) 
\begin{equation}
{(\theta^{(a_1, a_2,..., a_n)})}^{-1}=\theta^{(b_1,b_2,...,b_n)},
\end{equation}

\noindent where
\begin{equation}
b_i=\alpha_i(u, \theta(a_1, a_2,..., a_n,u)).
\end{equation} 
\end{lem}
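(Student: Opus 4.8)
The plan is to leverage the previously established lemmas, especially Lemma~4.3, which already does most of the structural work. Since $A$ is right-cancellable, Lemma~4.3 gives us the implications $(i)\Rightarrow(ii)\Rightarrow(iii)$: in particular every translation $\theta^{(a_1,\dots,a_n)}$ is a bijection (so $\overline{\theta}$ indeed lands in $S(A)$), and the composite of two translations is again a translation, and the inverse of a translation is a translation. Together these say exactly that $\overline{\theta}(A^n)$ is closed under composition and inversion in $S(A)$, and it contains the identity $1_A=\theta^{(e_1,\dots,e_n)}$ by identity (2.8); hence $\overline{\theta}(A^n)$ is a subgroup of $S(A)$. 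That disposes of the first assertion with essentially no new computation.

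Next I would prove the action is principal. Transitivity: given $x,y\in A$, set $a_i=\alpha_i(y,x)$; then by (2.2) the translation $\theta^{(a_1,\dots,a_n)}$ sends $x$ to $y$. Freeness (sharpness): if two translations agree at a single point of $A$, then by Lemma~4.2 they are equal as maps, hence equal as elements of the group $\overline{\theta}(A^n)$; equivalently, the stabilizer of any point is trivial. This gives the principality claim.

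For part (b), fix $u$ and put $b_i=\alpha_i\bigl(u,\theta(a_1,\dots,a_n,u)\bigr)$. By (2.2), $\theta^{(b_1,\dots,b_n)}$ sends $\theta(a_1,\dots,a_n,u)=\theta^{(a_1,\dots,a_n)}(u)$ back to $u$; that is, the composite $\theta^{(b_1,\dots,b_n)}\theta^{(a_1,\dots,a_n)}$ fixes $u$. Since this composite is itself a translation (by the subgroup property just proved, or directly by Lemma~4.3(ii)) and it fixes the point $u$, Lemma~4.2 forces it to equal $1_A$. Hence $\theta^{(b_1,\dots,b_n)}=(\theta^{(a_1,\dots,a_n)})^{-1}$, which is exactly (4.12)–(4.13). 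For part (a), I would use (b) together with the composition formula: by Lemma~4.3(ii) the product $\theta^{(a_1,\dots,a_n)}\theta^{(b_1,\dots,b_n)}$ equals $\theta^{(c_1,\dots,c_n)}$ where, evaluating both sides at $u$ and using that a translation is determined by its value at $u$, one reads off $c_i$ via identity (2.2) applied to the element $\theta^{(a_1,\dots,a_n)}\bigl(\theta^{(b_1,\dots,b_n)}(u)\bigr)$; writing $\theta^{(b_1,\dots,b_n)}(u)=\theta(b_1,\dots,b_n,u)$ and, via part (b), recognizing its preimage data, substitution produces precisely the nested expression displayed in (4.11).

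The only genuinely delicate point is bookkeeping: the formula (4.11) is a verbatim substitution of the inverse-translation data from (b) into the $\alpha_i$ that extracts coordinates, so the main obstacle is simply keeping the parentheses and the roles of $u$ straight rather than any conceptual difficulty. Everything else reduces to (2.2), identity (2.8), and Lemmas~4.1–4.3, all of which are available. I would write the argument so that part (a) is deduced after part (b), since (b) supplies the $\alpha_i(u,\theta(b_1,\dots,b_n,u))$ terms that appear inside (4.11).
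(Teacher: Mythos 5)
Your proposal is correct and follows essentially the same route as the paper: the subgroup property comes from Lemma 4.3 (closure under $gf^{-1}$ plus closure under inverses yielding closure under composition), principality from identity (2.2) and Lemma 4.2, part (b) from the explicit $c_i$-formula (4.6) specialized so that the composite fixes $u$ (the paper phrases this as substituting $a_i=e_i$ in (4.6), which is the same computation), and part (a) by combining (4.6) with part (b). The only point worth tightening in a written version is that Lemma 4.3(ii) does not literally say "the composite of two translations is a translation" but rather that $gf^{-1}$ is one, so the closure-under-composition step needs the one-line argument $gf=g(f^{-1})^{-1}$ that the paper spells out.
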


\begin{proof}
Identity (2.8) implies that $\overline{\theta}(A^{n})$ contains $1_A$. Lemma 4.3 implies that $\overline{\theta}(A^{n})$ is closed under inverses.

Let us show that $\overline{\theta}(A^{n})$ is closed under composition. According to Lemma 4.3, if $f,g\in \overline{\theta}(A^{n})$, then $gf^{-1}\in \overline{\theta}(A^{n})$. Moreover, $f^{-1}\in \overline{\theta}(A^{n})$. Hence $g(f^{-1})^{-1}=gf\in \overline{\theta}(A^{n})$.

Lemma 4.2 and identity (2.2) imply that the action of $\overline{\theta}(A^{n})$ on $A$ is principal.

Equalities (4.11) and (4.12) immediately follow from (4.6) (substitute $a_{i}=e_{i}$), while the equalities (4.9) and (4.10) follow from (4.6), (4.11) and (4.12).
\end{proof}

\begin{exmp}
Let $A$ be the protomodular algebra from Example 3.4. One can easily observe that there are precisely four different translations. For instance, the translations $\theta^{(1,2,1)}$ and $\theta^{(3,0,2)}$ are different and non-trivial. Moreover, either of them has order $2$. Hence the group $\overline{\theta}(A^{n})$ is isomorphic to the Klein Four Group, i.e. the product $\textbf{Z}_2\times \textbf{Z}_2$ of the two cyclic groups of order $2$.
\end{exmp}
\vskip+2mm

Let $\mathbb{X}_n$ denote the category with objects being quadruples 
\begin{equation}
(A,G,\varepsilon,\mu)
\end{equation}
\noindent where $A$ is a set, $G$ is a group acting on $A$ such that the action is principal, while $\varepsilon$ and $\mu$ are mappings resp. $A^{n}\rightarrow G$ and $G\rightarrow A^{n}$ with 
\begin{equation}
\varepsilon\mu=1_G. 
\end{equation}

\noindent Morphisms $(A,G,\varepsilon,\mu)\rightarrow (A',G',\varepsilon',\mu')$ are pairs $(\varphi,\psi)$, where $\varphi$ is a mapping $A\rightarrow A'$, while $\psi$ is a group homomorphism $G\rightarrow G'$ such that, for any $g\in G$ and $a\in A$, one has the equality
\begin{equation}
\varphi(ga)=\psi(g)\varphi(a),
\end{equation} 
\noindent and the diagram 
\begin{equation}
\xymatrix{G\ar[r]^{\mu}\ar[d]_{\psi}&A^{n}\ar[r]^{{\varepsilon}}\ar[d]^{\varphi^{n}}&G\ar[d]^{\psi}\\
G'\ar[r]_{\mu'}&A'^{n}\ar[r]_{\varepsilon'}&G'}
\end{equation}

\noindent is commutative.

Let $RC$-$\mathbb{V}_n$ be the category of right-cancellable $\mathbb{V}_n$-algebras.

\begin{prop}
The categories $RC$-$\mathbb{V}_n$ and $\mathbb{X}_n$ are equivalent. Under this equivalence the strict $RC$-$\mathbb{V}_n$-algebras correspond precisely to the quadruples $(A,G,\varepsilon,\mu)$ with bijective (and mutually inverse) $\varepsilon$ and $\mu$. If $n=1$, the groups correspond precisely to the quadruples $(A,G,\varepsilon,\mu)$ with
\begin{equation}
\varepsilon(a)\mu(e)=a,
\end{equation}
\noindent for any $a\in A$.
\end{prop}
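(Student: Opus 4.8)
The plan is to construct the two functors realizing the equivalence and then check they are mutually quasi-inverse, using Lemma 4.5 as the essential input.

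\emph{From $RC$-$\mathbb{V}_n$ to $\mathbb{X}_n$.} Given a right-cancellable $\mathbb{V}_n$-algebra $A$, I would send it to the quadruple $(A, \overline{\theta}(A^{n}), \varepsilon_A, \mu_A)$, where $\overline{\theta}(A^{n})$ is the subgroup of $S(A)$ furnished by Lemma 4.5 (together with its principal action on $A$). For the structure maps, fix once and for all an element $u\in A$; set $\varepsilon_A(a_1,\dots,a_n)=\theta^{(a_1,\dots,a_n)}$ (this is just $\overline{\theta}$ itself, hence surjective), and define $\mu_A$ on a bijection $g\in\overline{\theta}(A^{n})$ by picking the $n$-tuple that realizes $g$ via the explicit formula from Lemma 4.5(b); concretely $\mu_A(g)=\bigl(\alpha_1(u,g(u)),\dots,\alpha_n(u,g(u))\bigr)$ after noting $g=\theta^{(a_1,\dots,a_n)}$ and $g(u)=\theta(a_1,\dots,a_n,u)$. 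The identity $\varepsilon_A\mu_A=1_G$ then follows because applying $\overline{\theta}$ to that $n$-tuple returns the translation whose value at $u$ is $g(u)$, and a translation is determined by one value (Lemma 4.2). A homomorphism $f:A\to A'$ of $\mathbb{V}_n$-algebras induces $\varphi=f$ and $\psi:\overline{\theta}(A^{n})\to\overline{\theta}(A'^{n})$ sending $\theta^{(a_1,\dots,a_n)}$ to $\theta^{(f(a_1),\dots,f(a_n))}$; that this is well-defined and a group homomorphism uses Lemma 4.2 and the composition formula (4.9)--(4.10), and the commutativity of (4.17) is the statement that $f$ commutes with the $\alpha_i,\theta$, hence with the chosen formulas for $\varepsilon,\mu$.

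\emph{From $\mathbb{X}_n$ to $RC$-$\mathbb{V}_n$.} Given $(A,G,\varepsilon,\mu)$, define operations on $A$ by $\theta(a_1,\dots,a_n,b)=\varepsilon(a_1,\dots,a_n)\cdot b$ (the group element $\varepsilon(\vec a)$ acting on $b$) and $\alpha_i(a,b)=\pi_i\mu(g)$, where $g\in G$ is the unique element with $g\cdot b=a$ (unique by principality) and $\pi_i$ is the $i$-th projection $A^n\to A$. Take $e_i=\pi_i\mu(1_G)$. I would verify (2.1): $\alpha_i(a,a)$ uses the unique $g$ with $ga=a$, which is $1_G$, giving $e_i$. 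For (2.2): $\theta(\alpha_1(a,b),\dots,\alpha_n(a,b),b)=\varepsilon(\mu(g))\cdot b=g\cdot b=a$ by $\varepsilon\mu=1_G$ and the choice of $g$. Right-cancellability (3.1) holds because $\alpha_i(\theta(\vec a,b),\theta(\vec a{}',b))$ depends only on the unique group element carrying $\varepsilon(\vec a{}')b$ to $\varepsilon(\vec a)b$, namely $\varepsilon(\vec a)\varepsilon(\vec a{}')^{-1}$, which is independent of $b$. On morphisms, a pair $(\varphi,\psi)$ goes to $\varphi$; that $\varphi$ is a $\mathbb{V}_n$-homomorphism is exactly the equivariance (4.16) plus commutativity of (4.17).

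\emph{Quasi-inverseness and the refinements.} Starting from a quadruple, building the algebra, and reapplying $\overline{\theta}$ recovers $G$ up to the canonical isomorphism $G\cong \overline{\theta}(A^n)$ coming from the (faithful, by principality) action $G\hookrightarrow S(A)$; one checks the structure maps match on the nose using $\varepsilon\mu=1_G$ and the explicit $\mu$-formula. Starting from a right-cancellable algebra $A$, passing to $(A,\overline{\theta}(A^n),\varepsilon_A,\mu_A)$ and back returns $\theta(\vec a,b)=\theta^{(\vec a)}(b)$, which is $\theta$ by definition, and $\alpha_i$ reconstructed from the unique translation carrying $b$ to $\theta^{(\vec a)}(b)$, which by Lemma 4.5(b) is again the original $\alpha_i$ — here (2.2) is what identifies ``the unique $g$ with $gb=a$'' with $\theta^{(\alpha_1(a,b),\dots,\alpha_n(a,b))}$. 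For the strict case: $A$ strict $\iff$ each $\theta_b$ bijective $\iff$ (2.11) $\iff$ $\varepsilon_A$ is injective, and since $\varepsilon_A$ is always surjective and $\varepsilon_A\mu_A=1$, this is equivalent to $\varepsilon_A,\mu_A$ being mutually inverse bijections; under the equivalence this transports to the stated condition on $(A,G,\varepsilon,\mu)$. For $n=1$: by Lemma 3.5, $A$ is a group $\iff$ $A$ is right-cancellable and $ae=e\cdot$ wait — $\iff$ $a/e=a$ for all $a$; unwinding, $\alpha(a,e)=a$ translates under the correspondence into $\pi_1\mu(g)=a$ where $g$ is the unique element with $g\cdot e=a$, i.e. $g=\varepsilon(a)$ by (2.2)-type reasoning, giving precisely $\varepsilon(a)\mu(e)=$ wait, more carefully: the condition $a/e=a$ says the tuple $\mu(\varepsilon(a))=a$ when the reference point is $u=e$, equivalently $\varepsilon(a)\mu(e)=a$ once one tracks that $\mu$ was defined relative to $u=e$. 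I would present this last identification by spelling out that choosing the distinguished element to be the group unit $e$ makes $\theta(a,b)=ab$ in the induced group and (3.5) becomes (4.18).

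\textbf{Main obstacle.} The delicate point is not the verification of identities but the bookkeeping around the chosen basepoint $u\in A$: the functor $RC$-$\mathbb{V}_n\to\mathbb{X}_n$ depends on a choice of $u$, and I must check either that different choices give naturally isomorphic functors, or — cleaner — define $\mu_A$ in a $u$-independent way (e.g. via the formula of Lemma 4.5(b), noting the right-hand side is provably independent of $u$ by right-cancellability, which is precisely what Lemma 4.5's proof shows). Getting this independence argument stated crisply, and making sure the two round-trip natural isomorphisms are genuinely natural (not just objectwise), is where the real care is needed; everything else reduces to the already-established Lemmas 4.1--4.5 and the defining identities (2.1), (2.2), (2.8).
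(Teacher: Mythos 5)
Your overall strategy is exactly the paper's: $F$ sends $A$ to $(A,\overline{\theta}(A^{n}),\varepsilon,\mu)$ with $\varepsilon=\overline{\theta}$, the inverse functor is built from $\theta(\vec a,b)=\varepsilon(\vec a)b$ and $\alpha_i(a,b)=\pi_i\mu(g)$ for the unique $g$ with $gb=a$, right-cancellability comes from principality of the action, and the refinements are handled via strictness $\Leftrightarrow$ bijectivity of $\theta_b$ and via Lemma 3.5 for $n=1$. Your worry about basepoint dependence is resolved the same way the paper resolves it: the defining identity (3.1) (applied with $a_i'=b$, i.e.\ instance (3.7) of Lemma 3.10) makes the tuple defining $\mu$ independent of $u$.

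There is, however, one concrete error in a formula. You set
\[
\mu_A(g)=\bigl(\alpha_1(u,g(u)),\dots,\alpha_n(u,g(u))\bigr),
\]
citing Lemma 4.5(b). But Lemma 4.5(b) is the formula for the \emph{inverse} translation: $\theta^{(\alpha_1(u,g(u)),\dots,\alpha_n(u,g(u)))}$ is the translation sending $g(u)$ to $u$ (by identity (2.2)), i.e.\ it equals $g^{-1}$, not $g$. With your formula $\varepsilon_A\mu_A(g)=g^{-1}$, so $\varepsilon_A\mu_A\neq 1_G$ in general. The correct tuple, which is what the paper uses in (4.19), is
\[
\mu_A(g)=\bigl(\alpha_1(g(u),u),\dots,\alpha_n(g(u),u)\bigr),
\]
for which (2.2) gives $\theta(\alpha_1(g(u),u),\dots,\alpha_n(g(u),u),u)=g(u)$, and Lemma 4.2 then yields $\varepsilon_A\mu_A(g)=g$ — which is precisely the justification you yourself give, so the fix is only to swap the two arguments of each $\alpha_i$. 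With that correction your argument goes through and coincides with the paper's; the remaining loose end (a genuinely natural isomorphism $1_{\mathbb{X}_n}\Rightarrow FG$, which the paper writes down as $(1_A,\kappa)$ with $\kappa(g)=\theta^{\mu(g)}$) is routine.
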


\begin{proof}
Let us construct the functor $F:RC$-$\mathbb{V}_n\rightarrow \mathbb{X}_n$ as follows. For a right-cancellable protomodular algebra $A$, take the group $G=\overline{\theta}(A^{n})$, and define 
\begin{equation}
\varepsilon(a_1,a_2,...,a_n)=\theta^{(a_1,a_2,...,a_n)},
\end{equation}
\begin{equation}
\mu(\theta^{(a_1,a_2,...,a_n)})=(\alpha_1(\theta(a_1,a_2,...a_n,u),u),...,\alpha_n(\theta(a_1,a_2,...a_n,u),u)),
\end{equation}

\noindent where $u$ is an arbitrary element of $A$. Note that the value of each $\alpha_i(\theta(a_1,a_2,...a_n,u),u)$ does not depend on $u$, since $A$ is right-cancellable. Equality (2.2) implies that
\begin{equation}
\varepsilon\mu(\theta^{(a_1,a_2,...,a_n)})(u)=\theta^{(a_1,a_2,...,a_n)}(u). 
\end{equation}

\noindent Then from Lemma 4.2 we obtain 
\begin{equation}
\varepsilon\mu(\theta^{(a_1,a_2,...,a_n)})=\theta^{(a_1,a_2,...,a_n)}. 
\end{equation}

\noindent Hence $(A,\overline{\theta}(A^{n}), \varepsilon, \mu)$ is an object of $\mathbb{X}_n$.

 Assume $\varphi:A\rightarrow A'$ is a homomorphism of $\mathbb{V}_n$-algebras. Consider the mapping $\psi:\overline{\theta}(A^{n})\rightarrow \overline{\theta}(A'^{n})$ sending the bijection $\theta^{(a_1,a_2,...,a_n)}$ to the one $\theta^{(\varphi(a_1),\varphi(a_2),...,\varphi(a_n))}$. Lemma 4.2 implies that it is defined correctly. Moreover, equalities (4.9)-(4.10) and Lemma 4.2 imply that the pair $(\varphi,\psi)$ is a morphism of $\mathbb{X}_{n}$.
 
 Let us now construct the functor $G:\mathbb{X}_n\rightarrow RC$-$\mathbb{V}_n$. To this end consider a quadruple (4.13) from $\mathbb{X}_n$. Define the following operations on $A$:
 \begin{equation}
 \theta(a_1,a_2,...,a_n,b)=\varepsilon(a_1,a_2,...,a_n)b;
 \end{equation}
 \begin{equation}
 e_i=\pi_i\mu(e);
 \end{equation}
 \noindent and 
 \begin{equation}
 \alpha_i(a,b)=\pi_i\mu(g),
 \end{equation}
 
 \noindent where $\pi_{i}$ is the canonical projection $A^{n}\rightarrow A$, while $g$ is the unique element of $G$ with $g(b)=a$. Equality (4.14) implies that operations (4.22)-(4.24) give the structure of a $\mathbb{V}_n$-algebra on $A$. To show that this algebra is right-cancellable, we observe that 
 
 \begin{equation}
 \alpha_i(\theta(a_1,a_2,...,a_n,b),\theta(a'_1,a'_2,...,a'_n,b)=\pi_i\mu(g),
 \end{equation}
 
 \noindent where $g$ is the unique element of $G$ with
 \begin{equation}
 g(\theta(a'_1,a'_2,...,a'_n,b)=\theta(a_1,a_2,...,a_n,b).
 \end{equation}
 
\noindent Applying (4.22) we obtain
\begin{equation}
(g\varepsilon(a'_1,a'_2,...,a'_n))b=\varepsilon(a_{1},a_{2},...,a_{n})b.
\end{equation}

\noindent Since the action of $G$ is principal, (4.27) implies that
\begin{equation}
g\varepsilon(a'_1,a'_2,...,a'_n)=\varepsilon(a_{1},a_{2},...,a_{n}).
\end{equation}

\noindent Thus the element $g$ does not depend on $b$, and hence equality (3.1) is satisfied.

If a pair $(\varphi,\psi)$ is a morphism $(A,G,\varepsilon,\mu)\rightarrow (A',G',\varepsilon',\mu')$, then, as one easily can verify, $\varphi$ is a homomorphism of the corresponding $\mathbb{V}_n$-algebras $A\rightarrow A'$.

One can verify that $GF=1_{(RC-\mathbb{V}_n)_*}$. Moreover, we have a natural isomorphism $\tau:1_{\mathbb{X}_n}\rightarrow FG$. Namely, $\tau_{(A,G,\varepsilon,\mu)}$ is the pair $(1_A,\kappa)$, where $\kappa$ is the homomorphism $G\rightarrow \overline{\theta}(A^{n})$ sending $g\in A$ to the mapping $\theta^{\mu(g)}:A\rightarrow A$ (which sends $a\in A$ to $ga$).

If $G((A,G,\varepsilon,\mu))$ is a group, then (4.17) obviously is satisfied. The converse statement follows from Lemma 3.4.
\end{proof}

\section{Group operations on right-cancellable protomodular algebras}

Let $A$ be a right-cancellable protomodular algebra. As we mentioned above, each $u\in A$ gives the bijection between $A$ and $\overline{\theta}(A^{n})$: it sends an element $a$ of $A$ to the unique $g\in \overline{\theta}(A^{n})$ with $g(u)=a$. In fact, this bijection is the composition of the following two bijections 
\begin{equation}
\xymatrix{A\ar[r]^{\gamma_u} & A^{n}/\sim\ar[r] & \overline{\theta}(A^{n})},
\end{equation}

\noindent where $\sim$ is the equivalence relation from Lemma 4.1, and $\gamma_u$ sends an element $a$ of $A$ to the equivalence class of the $n$-tuple 
\begin{equation}
(\alpha_{1}(a,u),\alpha_{2}(a,u),...,\alpha_{n}(a,u)).
\end{equation}

 Transferring the group structure of $\overline{\theta}(A^{n})$ via these bijections, we obtain the group structure on $A$. We have 
\begin{prop}
Let $A$ be a right-cancellable protomodular algebra, and let $u$ be its arbitrary element. The binary operation on $A$ defined  by:
\begin{equation}
a\cdot_{u}b=\theta(\alpha_{1}(a,u),\alpha_{2}(a,u),...,\alpha_{n}(a,u),b).
\end{equation}

\noindent is a group operation. Its unit is $u$, while the inverse of an element $a\in A$ under this operation is given by:
\begin{equation}
a^{-_{u}1}=\theta(\alpha_{1}(u,a),\alpha_{2}(u,a),...,\alpha_{n}(u,a),u).
\end{equation}
\end{prop}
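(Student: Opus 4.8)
The plan is to obtain $(A,\cdot_u)$ by transporting the group structure of $\overline{\theta}(A^{n})$ (Lemma 4.5) along the bijection $\Phi_u\colon A\to\overline{\theta}(A^{n})$ that sends $a$ to the unique translation with $\Phi_u(a)(u)=a$; this $\Phi_u$ is precisely the bijection described at the beginning of this section, well defined because the action of $\overline{\theta}(A^{n})$ on $A$ is principal (with Lemma 4.2 guaranteeing uniqueness). The first step is to make $\Phi_u$ explicit: by identity (2.2) the translation $\theta^{(\alpha_1(a,u),\dots,\alpha_n(a,u))}$ sends $u$ to $\theta(\alpha_1(a,u),\dots,\alpha_n(a,u),u)=a$, so $\Phi_u(a)=\theta^{(\alpha_1(a,u),\dots,\alpha_n(a,u))}$, and in particular $\Phi_u$ is given by the $n$-tuple in (5.2).

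The second step is to identify the transported operation $a\star b:=\Phi_u^{-1}\!\bigl(\Phi_u(a)\circ\Phi_u(b)\bigr)$ with the closed formula (5.3). Since $\Phi_u^{-1}$ amounts to evaluating a translation at $u$, and $\Phi_u(b)(u)=b$ by (2.2), we get $a\star b=\bigl(\Phi_u(a)\circ\Phi_u(b)\bigr)(u)=\theta^{(\alpha_1(a,u),\dots,\alpha_n(a,u))}(b)=\theta(\alpha_1(a,u),\dots,\alpha_n(a,u),b)$, which is exactly $a\cdot_u b$. As $\overline{\theta}(A^{n})$ is a group and $\Phi_u$ is a bijection, $(A,\cdot_u)$ is a group; its unit is $\Phi_u^{-1}(1_A)$, i.e. the element $a$ with $1_A(u)=a$, namely $u$.

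For the inverse, $a^{-_u1}=\Phi_u^{-1}\!\bigl(\Phi_u(a)^{-1}\bigr)=\Phi_u(a)^{-1}(u)$. Applying part (b) of Lemma 4.5 with $c_i=\alpha_i(a,u)$ and with the free element taken to be $u$, and using (2.2) in the form $\theta(\alpha_1(a,u),\dots,\alpha_n(a,u),u)=a$, we get $\Phi_u(a)^{-1}=\theta^{(\alpha_1(u,a),\dots,\alpha_n(u,a))}$, hence $a^{-_u1}=\theta^{(\alpha_1(u,a),\dots,\alpha_n(u,a))}(u)=\theta(\alpha_1(u,a),\dots,\alpha_n(u,a),u)$, which is (5.4). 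The argument is essentially bookkeeping built on repeated use of (2.2) and Lemma 4.5; the only place needing a moment's care is checking that the abstract transported product coincides with the explicit formula (5.3), which is precisely what the evaluation-at-$u$ computation in the second step delivers, so I do not anticipate a genuine obstacle.
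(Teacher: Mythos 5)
Your proposal is correct and follows essentially the same route as the paper: transporting the group structure of $\overline{\theta}(A^{n})$ (Lemma 4.5) to $A$ along the bijection determined by $u$, and then identifying the transported operation and inverse with the explicit formulas (5.3) and (5.4). Your identification of the product is slightly more streamlined than the paper's (you evaluate $\Phi_u(a)\circ\Phi_u(b)$ at $u$ directly instead of passing through formula (4.10) and a right-cancellability substitution), but this is a minor simplification of the same argument rather than a different proof.
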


\begin{proof}
Substituting $a_i=\alpha_{i}(a,u)$ and $b_i=\alpha_i(b,u)$ in (4.10), we obtain the following value of $c_i$:
\begin{equation}
c_i=
\end{equation}
$$\alpha_{i}(\theta(\alpha_{1}(a,u),\alpha_{2}(a,u),...,\alpha_{n}(a,u),u),\theta(\alpha_{1}(u,b),\alpha_{2}(u,b),...,\alpha_{n}(u,b),u),$$

\noindent for any $i$. Since $A$ is right-cancellable, we can replace two $u$'s by $b$'s in the appropriate positions of the right-hand side of (5.5). Then, from (2.2) we obtain
\begin{equation}
c_i=\alpha_{i}(\theta(\alpha_{1}(a,u),\alpha_{2}(a,u),...,\alpha_{n}(a,u),b),u)
\end{equation}
\noindent Applying the equality
\begin{equation}
a\cdot_u b=
\theta_u(c_1,c_2,...,c_n),
\end{equation}

\noindent we obtain (5.3).

Equality (5.4) can be obtained from (4.9)-(4.10) by similar arguments.
\end{proof}

We can give the direct proof of the fact that the binary operation given by (5.3) is indeed a group operation on $A$. To this end let us first give
\begin{lem}
Let $A$ be a right-cancellable protomodular algebra, and $a,b,c,u$ be its arbitrary elements. For the binary operation $\cdot_{u}$ on $A$ given by (5.3) and any $i$ $(1\leq i\leq n)$, we have
\begin{equation}
\alpha_i(a\cdot_u c, b\cdot_u c)=\alpha_i(a, b).
\end{equation} 

\end{lem}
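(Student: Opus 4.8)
The statement to prove, Lemma 5.2, asserts that the operation $\cdot_u$ of (5.3) is "right-cancellable" in the sense that $\alpha_i(a\cdot_u c,\ b\cdot_u c)=\alpha_i(a,b)$ for all $i$. My plan is to reduce everything to the right-cancellability identity (3.1) for the original protomodular operation $\theta$, after unwinding the definition (5.3) once for each of the three products $a\cdot_u c$, $b\cdot_u c$, and $a\cdot_u b$. First I would write out $a\cdot_u c = \theta(\alpha_1(a,u),\dots,\alpha_n(a,u),c)$ and $b\cdot_u c = \theta(\alpha_1(b,u),\dots,\alpha_n(b,u),c)$. Then I substitute these into $\alpha_i(a\cdot_u c, b\cdot_u c)$, obtaining $\alpha_i\big(\theta(\alpha_1(a,u),\dots,\alpha_n(a,u),c),\ \theta(\alpha_1(b,u),\dots,\alpha_n(b,u),c)\big)$.

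The key observation is that this last expression has exactly the shape of the left-hand side of (3.1), with $a_j := \alpha_j(a,u)$, $a'_j := \alpha_j(b,u)$, and the "$b$" of (3.1) instantiated as $c$. So right-cancellability (3.1) lets me replace $c$ by $u$ throughout, yielding $\alpha_i\big(\theta(\alpha_1(a,u),\dots,\alpha_n(a,u),u),\ \theta(\alpha_1(b,u),\dots,\alpha_n(b,u),u)\big)$. Now identity (2.2) collapses each $\theta(\alpha_1(x,u),\dots,\alpha_n(x,u),u)$ to just $x$ — take $x=a$ in the first slot and $x=b$ in the second. This leaves precisely $\alpha_i(a,b)$, which is the right-hand side of (5.8). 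So the whole argument is: expand (5.3) twice, apply (3.1) to swap $c$ for $u$, then apply (2.2) twice to simplify, done.

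I do not anticipate a genuine obstacle here; the only thing to be careful about is the bookkeeping of which arguments of (3.1) are being matched to which, i.e.\ making sure the two $n$-tuples $(\alpha_1(a,u),\dots)$ and $(\alpha_1(b,u),\dots)$ play the roles of the primed and unprimed families while $c$ (resp.\ $u$) plays the role of the single last argument, which (3.1) allows us to change freely. One should also note that (3.1) is being used in the direction "replace $c$ by $u$," which is exactly its content (it says the value is independent of that argument). If anything deserves a word of remark, it is that this lemma is essentially the instance of (3.1) pulled back along the bijection $a\mapsto \theta^{(\alpha_1(a,u),\dots,\alpha_n(a,u))}$ identifying $A$ with the translation group — but for a self-contained proof the three-line computation above suffices.

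\begin{proof}
By the definition (5.3) of $\cdot_u$, for any $x\in A$ we have
\begin{equation}
x\cdot_u c=\theta(\alpha_1(x,u),\alpha_2(x,u),\dots,\alpha_n(x,u),c).
\end{equation}
Hence, for any $i$ $(1\leq i\leq n)$,
\begin{equation}
\alpha_i(a\cdot_u c,\ b\cdot_u c)=\alpha_i\bigl(\theta(\alpha_1(a,u),\dots,\alpha_n(a,u),c),\ \theta(\alpha_1(b,u),\dots,\alpha_n(b,u),c)\bigr).
\end{equation}
The right-hand side is an instance of the left-hand side of identity (3.1), with $a_j$ replaced by $\alpha_j(a,u)$, $a'_j$ replaced by $\alpha_j(b,u)$, and the last argument equal to $c$. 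Since $A$ is right-cancellable, we may replace $c$ by $u$ in this expression, obtaining
\begin{equation}
\alpha_i(a\cdot_u c,\ b\cdot_u c)=\alpha_i\bigl(\theta(\alpha_1(a,u),\dots,\alpha_n(a,u),u),\ \theta(\alpha_1(b,u),\dots,\alpha_n(b,u),u)\bigr).
\end{equation}
Finally, identity (2.2) gives $\theta(\alpha_1(a,u),\dots,\alpha_n(a,u),u)=a$ and $\theta(\alpha_1(b,u),\dots,\alpha_n(b,u),u)=b$, so the right-hand side equals $\alpha_i(a,b)$. This proves (5.8).
\end{proof}
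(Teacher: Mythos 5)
Your proof is correct and follows exactly the paper's own argument: expand both products via (5.3), use the right-cancellability identity (3.1) to replace the final argument $c$ by $u$, and then collapse with (2.2). No differences worth noting.
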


\begin{proof}
 We have 
\begin{equation}
\alpha_i(a\cdot_u c, b\cdot_u c)=
\end{equation} 
$$\alpha_i(\theta(\alpha_1(a,u),\alpha_2(a,u),..., \alpha_n(a,u),c),\theta(\alpha_1(b,u),\alpha_2(b,u),..., \alpha_n(b,u),c).$$\vskip+2mm

\noindent Replacing both $c$'s on the right-hand side of this equality by $u$, and then applying equality (2.2), we obtain $\alpha_i(a, b)$.
\end{proof}

Now we are ready to give the direct proof of Proposition 5.1.
\begin{proof} Let us introduce the new binary operation on $A$:
\begin{equation}
a/_{u}b=\theta(\alpha_{1}(a,b),\alpha_{2}(a,b),...,\alpha_{n}(a,b),u).
\end{equation}

\noindent Applying the fact that  $A$ is right-cancellable, identities (2.8) and (2.2), we obtain
\begin{equation}
\alpha_{i}((a/_{u}b), u)=\alpha_i(a,b).
\end{equation}

\noindent Then
\begin{equation}
(a/_{u}b)\cdot_u b=\theta(\alpha_{1}(a,b),\alpha_{2}(a,b),...,\alpha_{n}(a,b),b)=a.
\end{equation}

\noindent Hence the set $A$ equipped with the binary operations $\cdot_u$, $/_{u}$ and the constant $u$ is a $\mathbb{V}_1$-algebra. Moreover, Lemma 5.2 implies that this algebra is right-cancellable. But, as one can easily observe, $a\cdot_{u} u=a$, for any $a\in A$. Therefore, Lemma 3.5 implies that (5.3) is a group operation. Equality (5.4) immediately follows from (5.10).
\end{proof}

\begin{exmp}
For right-cancellable protomodular algebras with $n=1$ (in particular, for left semi-loops) the group operation (5.3) takes the form
\begin{equation}
a\cdot_u b=(a/u)b,
\end{equation}

\noindent while the inverse $a^{-_{u}1}$ takes the form $(u/a)u$.
\end{exmp}\vskip+4mm

Note that the Mal'cev operation corresponding to the group operation (5.3) is precisely the Mal'cev operation (2.5). Hence we obtain 
\begin{cor}
The Mal'cev term (2.5) is associative on right-cancellable protomodular algebras.
\end{cor}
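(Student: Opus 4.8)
The plan is to deduce the corollary directly from Proposition 5.1 together with the standard bijection (recalled in Section 2) between associative Mal'cev operations on a set and group operations on that set with a fixed unit. First I would fix a right-cancellable protomodular algebra $A$ and an arbitrary element $u\in A$, and invoke Proposition 5.1 to conclude that the operation $\cdot_u$ defined by (5.3) is a group operation on $A$ with unit $u$. The key observation is then that the Mal'cev operation $p$ of (2.9) (equivalently (2.5)) coincides with the Mal'cev operation canonically attached to the group $(A,\cdot_u)$, namely $p(a,b,c)=a\cdot_u b^{-_u 1}\cdot_u c$; this identification is the content of the sentence preceding the corollary and was already noted by Bourn--Janelidze \cite{BJ} and Ursini \cite{U}.

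Granting that identification, the argument is immediate: by the bijection recalled in Section 2, the Mal'cev operation arising from a group operation is always associative (it is $ab^{-1}c$, and associativity of that ternary operation is a routine consequence of the group axioms). Hence $p$ is associative on $A$. Since $A$ and $u$ were arbitrary, the identity (1.1) with $p$ in place of the generic Mal'cev term holds in every right-cancellable protomodular algebra, which is exactly the assertion that the term (2.5) is associative on such algebras.

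The one point that genuinely needs care --- and which I would expect to be the main (minor) obstacle --- is the verification that the Mal'cev operation (2.5) really is $a\cdot_u b^{-_u 1}\cdot_u c$ rather than merely "a" Mal'cev operation. Concretely, one should check, using (5.3), (5.4) and right-cancellability (via identity (2.2), identity (2.8), and Lemma 4.1/Lemma 4.2 as in the proof of Proposition 5.1), that $\theta(\alpha_1(a,b),\dots,\alpha_n(a,b),c)$ evaluated through the translations $\theta^{(-)}$ corresponds to composing the group element carrying $u$ to $a$, the inverse of the one carrying $u$ to $b$, and the one carrying $u$ to $c$. Once this bookkeeping is done --- it is essentially the same computation that produced (5.6)--(5.7) --- the corollary follows with no further work. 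If one prefers to avoid even this, an alternative is to repeat the strategy of the direct proof of Proposition 5.1: transport everything to the group $(A,\cdot_u)$ and observe that (2.5) unwinds, term by term using (5.3) and (5.4), to an expression of the form $a\cdot_u b^{-_u1}\cdot_u c$, whose associativity in the third-variable-fixed sense (1.1) is then a one-line group computation.
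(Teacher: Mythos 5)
Your proposal is correct and follows essentially the same route as the paper: the paper derives the corollary solely from the sentence immediately preceding it, namely that the Mal'cev operation (2.9) is exactly the operation $a\cdot_u b^{-_u1}\cdot_u c$ attached to the group $(A,\cdot_u)$ of Proposition 5.1, whence associativity follows from the standard correspondence recalled in Section 2. The bookkeeping you flag (identifying $\theta(\alpha_1(a,b),\dots,\alpha_n(a,b),c)$ with $(g_ag_b^{-1})(c)$ via Lemma 4.2 and identity (2.2)) is exactly what the paper leaves implicit, and your outline of it is sound.
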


We can summarize the protomodular analogs of the classical group term existence criterion (see Section 1). 
\begin{theo} 
Let $\mathbb{V}$ be a variety of universal algebras. The following conditions are equivalent:

(i) the algebraic theory of $\mathbb{V}$ contains a group term;

(ii) the algebraic theory of $\mathbb{V}$ contains protomodular terms with respect to which all $\mathbb{V}$-algebras are right-cancellable;

(iii) the algebraic theory of $\mathbb{V}$ contains protomodular terms $\theta$, $\alpha_i$, and $e_i$  ($i=1,2,...,n)$ such that $\theta$ is consociative, and, moreover, $e_1=e_2=...=e_n$;

(iv) the algebraic theory of $\mathbb{V}$ contains protomodular terms $\theta$ and $\alpha_i$ ($i=1,2,...,n)$ which satisfy the identity:
$$\theta(\alpha_1(a,b),...,\alpha_n(a,b),\theta(\alpha_1(c,d),...,\alpha_n(c,d),s))$$
$$=\theta(\alpha_1(\theta(\alpha_1(a,b),\ldots, \alpha_n(a,b),c),d),$$
$$\alpha_2(\theta(\alpha_1(a,b),\ldots, \alpha_n(a,b),c),d),\ldots,$$
$$\alpha_n(\theta(\alpha_1(a,b),\ldots, \alpha_n(a,b),c),d),s).$$
\end{theo}
\vskip+3mm

\begin{proof}
The equivalence of (i) and (ii) immediately follows from Proposition 5.1. The equivalence of (i), (iii) and (iv) is given in \cite{Z}.
\end{proof}\vskip+3mm

\begin{rem}
In view of Theorem 5.5 there naturally arises the question whether the notions of a right-cancellable protomodular algebra and of a consociative protomodular algebra are  equivalent. The answer is negative. For instance, non-trivial Boolean algebras are consociative with respect to the protomodular terms given in \cite{BC} (see Example 3.5 of \cite{Z}), but they are not right-cancellable with respect to these terms (see Remark 3.8). Moreover, there are right-cancellable left semi-loops which are not consociative, as follows from Example 3.2.
\end{rem}

Finally we will show that right-cancellable $\mathbb{V}_n$-algebras can be characterized as groups with simple additional structures. 
Consider the category $\mathbb{Y}_{n}$ with objects being triples $(A, \sigma,\rho)$,  where $A$ is a group, $\sigma$ and $\varrho$ are (not necessarily homomorphisms, but) mappings resp. $A^{n}\rightarrow A$ and $A\rightarrow A^{n}$ such that $\sigma\varrho=1_A$.

\begin{prop}
The category $(RC$-$\mathbb{V}_n)_{*}$ of right-cancellable $\mathbb{V}_n$-algebras with fixed elements is isomorphic to the category $\mathbb{Y}_n$. Under this isomorphism, the strict right-cancellable $\mathbb{V}_n$-algebras (with fixed elements) correspond precisely to the triples  $(A, \sigma,\rho)$ where $\sigma$ and $\varrho$ are bijections. If $n=1$, then the groups (with fixed elements) correspond precisely to the triples  $(A, \sigma,\rho)$ with $\sigma=\varrho=1_A$.
\end{prop}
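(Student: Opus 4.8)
The plan is to produce a pair of mutually inverse functors $F\colon (RC$-$\mathbb{V}_n)_{*}\to\mathbb{Y}_n$ and $H\colon\mathbb{Y}_n\to (RC$-$\mathbb{V}_n)_{*}$, exploiting that a chosen element $u$ of a right-cancellable algebra $A$ trivialises the principal action of Lemma~4.5 and thereby transports the group $\overline{\theta}(A^{n})$ onto the underlying set $A$. For an object $(A,u)$ of $(RC$-$\mathbb{V}_n)_{*}$ I set $F(A,u)=(A_{\cdot_{u}},\sigma,\varrho)$, where $A_{\cdot_{u}}$ is the group provided by Proposition~5.1 and
$$\sigma(a_{1},a_{2},...,a_{n})=\theta(a_{1},a_{2},...,a_{n},u),\qquad
\varrho(a)=(\alpha_{1}(a,u),\alpha_{2}(a,u),...,\alpha_{n}(a,u));$$
identity (2.2) yields $\sigma\varrho=1_{A}$, so $F(A,u)\in\mathbb{Y}_n$, and a $\mathbb{V}_n$-homomorphism $\varphi\colon(A,u)\to(A',u')$ with $\varphi(u)=u'$, being compatible with $\theta$ and the $\alpha_{i}$, is at the same time a group homomorphism $A_{\cdot_{u}}\to A'_{\cdot_{u'}}$ commuting with the $\sigma$'s and $\varrho$'s. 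Conversely, for $(A,\sigma,\varrho)\in\mathbb{Y}_n$ — $A$ a group, written multiplicatively with unit $e$ — I define $H(A,\sigma,\varrho)$ to be the set $A$ with distinguished element $e$ and operations
$$\theta(a_{1},a_{2},...,a_{n},b)=\sigma(a_{1},a_{2},...,a_{n})\,b,\qquad
\alpha_{i}(a,b)=\pi_{i}\varrho(ab^{-1}),\qquad e_{i}=\pi_{i}\varrho(e).$$
This is exactly the construction $G$ from the proof of Proposition~4.6 applied to the quadruple $(A,A,\sigma,\varrho)$ in which $A$ acts on itself by left translation; in particular (2.1) and (2.2) fall out of $\sigma\varrho=1_{A}$, while right-cancellability holds because $\alpha_{i}(\theta(a_{1},...,a_{n},b),\theta(a'_{1},...,a'_{n},b))=\pi_{i}\varrho\bigl(\sigma(a_{1},...,a_{n})\,\sigma(a'_{1},...,a'_{n})^{-1}\bigr)$ visibly does not involve $b$. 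On morphisms (which in $\mathbb{Y}_n$ are, as in $\mathbb{X}_n$, group homomorphisms $\psi$ with $\psi^{n}\varrho=\varrho'\psi$ and $\sigma'\psi^{n}=\psi\sigma$) the functor $H$ sends $\psi$ to $\psi$ itself.

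The next step is to check $FH=1$ and $HF=1$ on the nose, so that the categories are genuinely isomorphic rather than merely equivalent. The composite $FH$ is immediate: applying $F$ to $H(A,\sigma,\varrho)$ returns $a\cdot_{e}b=\sigma(\varrho(a))\,b=ab$, $\sigma'(a_{1},...,a_{n})=\theta(a_{1},...,a_{n},e)=\sigma(a_{1},...,a_{n})$, and $\varrho'(a)=(\alpha_{1}(a,e),...,\alpha_{n}(a,e))=\varrho(a)$. The composite $HF$ is where the Section~4 machinery is essential, and I expect it to be the main obstacle. One must see that the operation $H$ builds out of $F(A,u)$, namely $\theta'(a_{1},...,a_{n},b)=\theta(a_{1},...,a_{n},u)\cdot_{u}b$, equals $\theta(a_{1},...,a_{n},b)$: by (2.2) the tuple $(\alpha_{1}(\theta(a_{1},...,a_{n},u),u),...,\alpha_{n}(\theta(a_{1},...,a_{n},u),u))$ and $(a_{1},...,a_{n})$ have the same image under $\theta_{u}$, hence by Lemma~4.1 the same image under every $\theta_{b}$, which gives the equality. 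Likewise $HF$ returns $\alpha'_{i}(a,b)=\alpha_{i}(a\cdot_{u}b^{-_{u}1},u)$; since $(a/_{u}b)\cdot_{u}b=a$ one has $a\cdot_{u}b^{-_{u}1}=a/_{u}b$, so $\alpha'_{i}(a,b)=\alpha_{i}((a/_{u}b),u)=\alpha_{i}(a,b)$ by identities (5.11)--(5.12), while $e'_{i}=\alpha_{i}(u,u)=e_{i}$ and the distinguished element of $HF(A,u)$ is the unit $u$ of $A_{\cdot_{u}}$. The delicate point is precisely that the $\varrho$-data recovered through $A_{\cdot_{u}}$ must turn out to be \emph{literally} $\alpha_{i}(-,u)$ rather than merely $\sim$-equivalent to it — which is exactly what (5.11)--(5.12) deliver — and that the distinguished element has to be tracked at every step, its bookkeeping being the entire reason for passing from $\mathbb{X}_n$ to $\mathbb{Y}_n$.

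Finally I would read off the two distinguished subclasses from the explicit form of $F$. Since $\sigma=\theta_{u}$ and $\theta_{u}$ is surjective by (2.2), strictness of $A$ — bijectivity of every $\theta_{b}$ — is, via Lemma~4.1, equivalent to bijectivity of $\theta_{u}=\sigma$, and, as $\sigma\varrho=1_{A}$, to $\sigma$ and $\varrho$ being mutually inverse bijections; hence strict right-cancellable algebras correspond exactly to the triples with bijective $\sigma$ and $\varrho$. For $n=1$, if the $\mathbb{V}_{1}$-algebra $A$ is a group (equivalently, by Lemma~3.5, right-cancellable with $\theta(a,e)=a$) and its distinguished element is the unit $e$, then $\sigma(a)=\theta(a,e)=ae=a$ and $\varrho(a)=\alpha(a,e)=a/e=a$, so $F(A,e)=(A,1_{A},1_{A})$; conversely, if $\sigma=\varrho=1_{A}$ then $H(A,1_{A},1_{A})$ has $\theta(a,b)=ab$, $\alpha(a,b)=ab^{-1}$ and $e$-term the unit of $A$, i.e. it is the $\mathbb{V}_{1}$-algebra of the group $A$ with its unit as distinguished element (alternatively one invokes Lemma~3.5, noting $\theta(a,e)=a$). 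Because $F$ and $H$ are mutually inverse, this pins the correspondence down exactly, which completes the proof.
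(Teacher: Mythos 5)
Your proposal is correct and follows essentially the same route as the paper: the same pair of functors (with $\sigma=\theta_u$, $\varrho=(\alpha_1(-,u),\ldots,\alpha_n(-,u))$ in one direction and $\theta(a_1,\ldots,a_n,b)=\sigma(a_1,\ldots,a_n)b$, $\alpha_i(a,b)=\pi_i\varrho(ab^{-1})$ in the other), the same use of Proposition 5.1 and identities (5.11)--(5.12) to see that $QP$ returns the original $\alpha_i$, and the same reading-off of the strict and group cases. You merely supply more of the routine verifications (the $PQ$ direction, the recovery of $\theta$ via Lemma 4.1, and the direct check of right-cancellability) that the paper leaves to the reader.
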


\begin{proof}
Let us construct the functor $P:(RC$-$\mathbb{V}_n)_{*}\rightarrow \mathbb{Y}_n$. If $A$ is a right-cancellable $\mathbb{V}_n$-algebra and $u\in A$, then define $P(A)$ as the triple $(A, \theta_u, (\alpha_1(-,u),\alpha_2(-,u),...,\alpha_n(-,u)))$, where $A$ is equipped with the group operation $\cdot_u$.

To construct the functor $Q:\mathbb{Y}_n\rightarrow$ $(RC$-$\mathbb{V}_n)_{*}$, consider a triple $(A, \sigma, \varrho)$ from $\mathbb{Y}_{n}$. Define the following operations on $A$:
\begin{equation}
\theta(a_1,a_2,...,a_n,b)=\sigma(a_1,a_2,...,a_n)b,
\end{equation}
\begin{equation}
\alpha_{i}(a,b)=\pi_i\varrho(ab^{-1}),
\end{equation}
\begin{equation}
e_i=\pi_i \varrho(e).
\end{equation}

\noindent One can easily show that the set $A$ equipped with these operations and with the fixed element $e$ is an object of the category $(RC$-$\mathbb{V}_n)_{*}$. 

To verify that the functors $P$ and $Q$  are mutually inverse, let us observe that for an algebra $A$ from $RC$-$\mathbb{V}_n$, elements $a, b$ and $u$ from $A$ and the operation $\alpha'_{i}$ of $QP(A)$, we have 
\begin{equation}
\alpha'_{i}(a,b)=\rho_{i}(a\cdot_{u}b^{-1})=\alpha_{i}((a\cdot_{u}b^{-1}),u).
\end{equation}

\noindent Proposition 5.1 and Lemma 5.2 imply that the right-hand part of (5.17) is precisely  $\alpha_{i}(a,b)$. 

If the algebra corresponding to an object $(A, \sigma,\rho)$ from $\mathbb{Y}_{n}$ is a group, then, for any $a\in A$, we have $a=\alpha(a,e)= \rho(a)$. Hence $\rho=\varepsilon=1_A$.

\end{proof}

Proposition 5.7 in particular implies Proposition 3.7. In fact, we can make the latter statement stronger.

\begin{prop}
Let $A$ be a finite algebra from a variety $\mathbb{V}$ that has protomodular terms with $n=1$. If $A$ is not a left semi-loop with respect to these protomodular terms, then the algebraic theory of $\mathbb{V}$ does not contain a term whose corresponding operation on $A$ is a group operation.
\end{prop}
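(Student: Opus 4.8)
The plan is to argue by contraposition combined with a counting argument, reducing everything to the case $n=1$. Suppose $A$ is a finite $\mathbb{V}$-algebra (with protomodular terms $\theta$, $\alpha$, $e$, $n=1$) which is \emph{not} a left semi-loop with respect to these terms; equivalently, by the characterization recalled before Section~3 (strict $\mathbb{V}_1$-algebras are precisely left semi-loops), $A$ is not strict, so there is some $b\in A$ for which the translation $\theta^{(a)}\colon c\mapsto ab$ is \emph{not} a bijection $A\to A$ for some $a$ — more precisely, $\theta_b\colon A\to A$, $a\mapsto ab$, fails to be injective for some $b$. Since $A$ is finite, failure of injectivity of $\theta_b$ is equivalent to failure of surjectivity: the image $\theta_b(A)=\{ab : a\in A\}$ is a proper subset of $A$. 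Fix such a $b$ and set $B=\theta_b(A)\subsetneq A$, so $|B|<|A|$.

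Next I would show that $B$ is forced to be invariant under \emph{every} group operation on $A$ coming from the algebraic theory, in the following sense: if $\cdot$ is any binary operation on $A$ that is the realization of a term of $\mathbb{V}$ and that happens to be a group operation, then I claim $ab\in B$ already pins down enough structure to contradict $|B|<|A|$. The key input is Proposition~5.1: if $A$ were right-cancellable, then $A$ would carry the group operation $\cdot_u$ and, since right-cancellable $\mathbb{V}_1$-algebras need not be strict only when they are not left semi-loops, one checks directly that $a\cdot_u b=(a/u)b$ ranges over all of $A$ as $a$ varies; but $(a/u)b$ lies in $\theta_{b}(A)=B$, contradicting $|B|<|A|$ unless $A$ is a left semi-loop. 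So the real content is: any finite $\mathbb{V}$-algebra admitting a term-definable group operation is right-cancellable (with respect to its given protomodular terms). This is exactly the $n=1$ case of the implication (i)$\Rightarrow$(ii) direction of Theorem~5.5 restricted to the single algebra $A$ — indeed if $\mathbb{V}$ has a group term $g$, then the corresponding protomodular terms $\theta(x,y)=g(x,y)$, $\alpha(x,y)=g(x,y^{-1})$ make \emph{all} $\mathbb{V}$-algebras right-cancellable; but here we only have a term whose realization \emph{on $A$} is a group operation, not a group term of the whole variety, so one must run the argument pointwise on $A$.

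So the heart of the argument is the following pointwise statement, which I would isolate as the main lemma: \emph{if a finite $\mathbb{V}_1$-algebra $A$ (with protomodular terms $\theta$, $\alpha$, $e$) carries some term-definable group operation $\cdot$, then $A$ is right-cancellable, hence by Proposition~5.1 and Lemma~3.5 is a left semi-loop.} To prove it, note that a term-definable group operation gives a term-definable Mal'cev operation $q(x,y,z)=x\cdot y^{-1}\cdot z$ on $A$ which is \emph{associative}. On the other hand the protomodular terms supply the Mal'cev operation $p$ of (2.9). Two Mal'cev operations that are both term-definable on a finite algebra, in a congruence-permutable setting, must be compatible with the same congruences; using that $p$ and $q$ induce the same ternary relation structure, I would show $p$ is itself associative on $A$, and then invoke the bijection (recalled in Section~2) between associative Mal'cev operations and group operations with a fixed unit to conclude $\theta_u$ is a bijection, i.e. $A$ is strict, i.e. a left semi-loop. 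The main obstacle I anticipate is precisely making rigorous the step ``$q$ associative $\Rightarrow$ $p$ associative'' for a \emph{single} finite algebra: without a variety-level group term one cannot simply quote Theorem~5.5, so one needs either a direct clone-theoretic comparison of $p$ and $q$ on $A$ (both are Mal'cev, both are in the term clone of $A$, and on a finite algebra the Mal'cev operation in a congruence-permutable clone is ``essentially unique'' up to the relevant structure) or a hands-on verification that $\theta_u$ has a two-sided inverse built from $q$. Once strictness is in hand, Proposition~3.7 (equivalently, the already-established Proposition~5.7) finishes the proof, since a finite right-cancellable $\mathbb{V}_1$-algebra is a left semi-loop, contradicting the hypothesis.
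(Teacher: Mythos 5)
There is a genuine gap, and it sits exactly where you flag it yourself. Your whole argument funnels through the lemma that a term-definable group operation on the single finite algebra $A$ forces $A$ to be right-cancellable (hence strict) with respect to the \emph{given} protomodular terms, and you propose to obtain this from an ``essential uniqueness'' of Mal'cev operations in the term clone of $A$. No such principle holds: a congruence-permutable clone typically contains many distinct Mal'cev operations, and associativity of one of them (your $q(x,y,z)=x\cdot y^{-1}\cdot z$) implies nothing about associativity of another (the $p$ of (2.9)). The paper's own Remark 3.8 already illustrates that the presence of a group term in the theory does not make an algebra right-cancellable with respect to an arbitrary system of protomodular terms (Boolean algebras with the terms (2.5)--(2.7)), so the reduction to right-cancellability cannot be reached by the soft argument you sketch, and you give no proof of it. There is also a concrete false step earlier: identity (2.2) gives $(a/b)b=a$ for every $a$, so every translation $\theta_b$ is \emph{surjective} and your set $B=\theta_b(A)$ equals $A$; the proper subset you intend to count against does not exist. (For finite $A$ with $n=1$ this surjectivity in fact forces $\theta_b$ to be bijective, so the counting has to be carried out on a proper subalgebra-like piece of $A$, not on all of $A$.)

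The paper's proof avoids both problems by localizing the counting to an ideal. From $(ab)/b\neq a$ it takes the ideal $I$ generated by $a$ and $(ab)/b$; both generators lie in $I$ and have the same image $ab$ under $\theta_b$ by (2.2), so $|\theta_b(I)|<|I|$. Since $n=1$, the variety with these terms is ideal determined, and $\theta_b(I)=Ib$ is the congruence class of $b$ for the congruence determined by $I$, while $I$ itself is the class of $e$. If some term of the theory realized a group operation on $A$, every congruence of $A$ would in particular be a congruence for that group operation, so all classes would be cosets of one normal subgroup and hence equinumerous, contradicting $|Ib|<|I|$. The only global input is that congruences respect all term operations; no comparison of Mal'cev terms is needed. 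If you want to repair your write-up, replace the clone-theoretic step and the set $B$ by this ideal/congruence-class count.
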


\begin{proof}
For convenience, we use the abbreviation $ab$ for $\theta(a,b)$, and 
$a/b$ for $\alpha(a,b)$. There are elements $a,b\in A$ with
\begin{equation}
(ab)/ b\neq a.
\end{equation}
Let $I$ be the ideal generated by $a$ and $(ab)/ b$. Consider the mapping $\theta_b: A\rightarrow A$ sending $a$ to $ab$. The restriction of $\theta_b$ on $I$ is not injective, as follows from (5.18) and the equality
$$((ab)/b)b=ab.$$
\noindent Hence, since $I$ is finite, the cardinality of $\theta_b(I)$ is less than the cardinality of $I$. But $\theta_b(I)=Ib$ is an equivalence class of the congruence induced by $I$ (see Section 2). This completes the proof.
\end{proof}

\section{Acknowledgment}
Financial support from  Shota Rustaveli  National Science Foundation
(Ref.: FR-18-10849) is gratefully acknowledged.

\vskip+2mm

\textit{Authors address: Andrea Razmadze Mathematical Institute of Tbilisi State University}, 
\textit{6 Tamarashvili Str., Tbilisi 0177, Georgia, e-mail: dali.zangurashvili@tsu.ge}

\end{document}